\newtheorem{thm}[equation]{Theorem}
\newtheorem{lem}[equation]{Lemma}
\newtheorem{prop}[equation]{Proposition}
\theoremstyle{definition}
\newtheorem{defn}[equation]{Definition}
\newtheorem{remark}[equation]{Remark}
\newtheorem{observation}[equation]{Observation}
\newtheorem*{notation}{Notation}
\numberwithin{equation}{section}
\begin{document}

\title{\bf\Large A uniqueness theorem for meromorphic maps into $\mathbb{P}^n$ with generic $(2n+2)$ hyperplanes}%
\author{Kai Zhou \footnote{{\it E-mail address}: \texttt{zhoukai@tongji.edu.cn}}}%
\date{}%
\maketitle
\def\thefootnote{}
% ----------------------------------------------------------------
\begin{abstract}
  Let $ H_1,\dots,H_{2n+2}$ be \emph{generic} $(2n+2)$ hyperplanes in $\mathbb{P}^n.$ It is proved that if meromorphic maps $ f $ and $ g $ of $\mathbb{C}^m $ into $\mathbb{P}^n $ satisfy $ f^*(H_j)=g^*(H_j)$ $(1\leq j\leq 2n+2)$ and $ g $ is algebraically non-degenerate then $ f=g.$ This result is essentially implied by the proof of Hirotaka Fujimoto in papers [Nagoya Math. J., 1976(64): 117--147] and [Nagoya Math. J., 1978(71): 13--24]. This note gives a complete proof of the above uniqueness result. \footnote{2010 {\it Mathematics Subject Classification.} 32H30.}
\end{abstract}
% -----------------------------------------------------------------
\section{Introduction}             \label{sec:Introduction}

In the 1970s, Hirotaka Fujimoto \cite{Fujimoto75, Fujimoto76, Fujimoto78} considered the generalization of the uniqueness theorems of meromorphic functions obtained in \cite{Polya1921} and \cite{Nevanlinna1926} to the case of meromorphic maps into a complex projective space. Fujimoto considered two meromorphic maps $ f $ and $ g $ of $\mathbb{C}^m $ into the $ n$-dimensional complex projective space $\mathbb{P}^n $ that satisfy $ f^*(H_j)=g^*(H_j)$ $(1\leq j\leq q)$ for $ q $ hyperplanes $ H_1,\dots, H_q $ in $\mathbb{P}^n $ in general position, where $ f^*(H_j)$ and $ g^*(H_j)$ are the pullback divisors of $ H_j $ by $ f $ and by $ g $ respectively. Under some additional conditions, it can be proved that $ f=g.$ For instance, Fujimoto proved in \cite{Fujimoto76, Fujimoto78} that if $ g $ is algebraically non-degenerate and $ q\geq 2n+3 $ then $ f=g.$

I found that Fujimoto's proof in the papers \cite{Fujimoto76,Fujimoto78} essentially implies the following uniqueness result.
\begin{thm}           \label{thm:generic_(2n+2)}
Let $ H_1,\dots,H_{2n+2}$ be \emph{generic} $(2n+2)$ hyperplanes in $\mathbb{P}^n.$ Let $ f $ and $ g $ be meromorphic maps of $\mathbb{C}^m $ into $\mathbb{P}^n $ such that $ f(\mathbb{C}^m)\not\subseteq H_j $, $ g(\mathbb{C}^m)\not\subseteq H_j $ and $ f^*(H_j)=g^*(H_j)$ $(1\leq j\leq 2n+2).$ If $ g $ is algebraically non-degenerate, then $ f=g.$
\end{thm}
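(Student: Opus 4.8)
The plan is to reduce the statement to an assertion about nowhere-vanishing holomorphic functions (units) and then to apply a Borel-type theorem. Fix reduced representations $f=(f_0:\cdots:f_n)$ and $g=(g_0:\cdots:g_n)$, write $H_j=\{L_j=0\}$ for linear forms $L_j$, and set $F_j=L_j\circ f$, $G_j=L_j\circ g$. These are holomorphic, and the hypotheses $f(\mathbb{C}^m)\not\subseteq H_j$, $g(\mathbb{C}^m)\not\subseteq H_j$ give $F_j\not\equiv0$, $G_j\not\equiv0$. The content of $f^*(H_j)=g^*(H_j)$ is that $F_j$ and $G_j$ have the same divisor, so $h_j:=F_j/G_j$ is a unit; since $\mathbb{C}^m$ is simply connected, $h_j=e^{\phi_j}$ for some entire $\phi_j$. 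Because any $n+1$ of the $L_j$ are linearly independent, I would first observe that $f=g$ is equivalent to $h_1=\cdots=h_{2n+2}$: if all the $h_j$ equal a common unit $u$, then $L_j(f)=u\,L_j(g)$ for $n+1$ independent forms, whence $(f_0,\dots,f_n)=u(g_0,\dots,g_n)$ and $f=g$. Thus the whole problem becomes the claim that all the units $h_j$ coincide.

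Next I would manufacture a single identity expressing a vanishing sum of units. Collect the coefficient vectors of the $L_j$ into a $(2n+2)\times(n+1)$ matrix $A$ of rank $n+1$, and choose an $(n+1)\times(2n+2)$ matrix $B$ of rank $n+1$ with $BA=0$ (its rows encode the linear relations among the forms). With $D=\mathrm{diag}(h_1,\dots,h_{2n+2})$ one has $(F_1,\dots,F_{2n+2})^{\mathsf{T}}=A(f_0,\dots,f_n)^{\mathsf{T}}=DA(g_0,\dots,g_n)^{\mathsf{T}}$, so this vector lies in the column space of $A$ and is killed by $B$; hence $B\,D\,A\,(g_0,\dots,g_n)^{\mathsf{T}}\equiv0$. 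As $g$ is algebraically non-degenerate the vector $(g_0,\dots,g_n)$ never vanishes, so the square matrix $BDA$ is singular at every point and $\det(BDA)\equiv0$. Expanding by the Cauchy--Binet formula gives
\[
\sum_{\substack{S\subseteq\{1,\dots,2n+2\}\\ |S|=n+1}} c_S\prod_{j\in S}h_j\equiv0,\qquad c_S=\det(B_S)\det(A_S),
\]
where $A_S,B_S$ denote the maximal submatrices on the index set $S$. General position makes every $\det(A_S)\neq0$, and Plücker duality gives $\det(B_S)=\pm\kappa\,\det(A_{S^{c}})$ for a fixed constant $\kappa\neq0$; hence every $c_S$ is nonzero. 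Since each $\prod_{j\in S}h_j=\exp(\sum_{j\in S}\phi_j)$ is a unit, this is a vanishing $\mathbb{C}$-linear combination of $\binom{2n+2}{n+1}$ units, all with nonzero coefficients.

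Now I would invoke Fujimoto's generalized Borel lemma: a vanishing sum of units decomposes into minimal vanishing subsums, and within each such subsum every pairwise ratio of the units is constant. The crux of the argument --- and the step where the word \emph{generic} (rather than merely \emph{general position}) is indispensable --- is to show that this partition forces every ratio $h_k/h_j$ to be constant. I expect this combinatorial analysis to be the main obstacle: one must exclude the ``accidental'' vanishing subsums that could arise for special configurations of the $H_j$, exactly as a special cross-ratio permits a non-trivial M\"obius relation in the classical $n=1$ four-point situation. Granting that generic choice of the hyperplanes rules out such coincidences, the surviving subsum structure makes all ratios of the units $\prod_{j\in S}h_j$ constant; since any two indices $j,k$ arise as a single-element swap between two $(n+1)$-subsets, and these swaps connect all $2n+2$ indices, one concludes that $h_k/h_j$ is constant for every $j,k$, say $h_j=\gamma_j\,h$ with constants $\gamma_j$ and a fixed unit $h$.

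Finally I would upgrade ``ratios constant'' to ``units equal''. For any $(n+2)$-element set $T$ the forms $\{L_j\}_{j\in T}$ satisfy a unique relation $\sum_{j\in T}c_j^{(T)}L_j=0$ with all $c_j^{(T)}\neq0$, yielding both $\sum_{j\in T}c_j^{(T)}G_j\equiv0$ and $\sum_{j\in T}c_j^{(T)}h_jG_j\equiv0$. Substituting $h_j=\gamma_j h$ and cancelling $h$ produces the further $\mathbb{C}$-linear relation $\sum_{j\in T}c_j^{(T)}\gamma_jG_j\equiv0$. Algebraic non-degeneracy of $g$ forces a form vanishing on $g(\mathbb{C}^m)$ to be the zero form, so the space of $\mathbb{C}$-linear relations among $\{G_j\}_{j\in T}$ is exactly one-dimensional, spanned by $(c_j^{(T)})_j$; comparing the two relations makes $\gamma_j$ independent of $j$ on $T$. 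Letting $T$ range over all $(n+2)$-subsets, which overlap pairwise, forces all the $\gamma_j$ equal, hence all $h_j$ coincide, and by the first paragraph $f=g$.
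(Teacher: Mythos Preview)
Your setup is correct and matches the paper's: the functions $h_j=F_j/G_j$ are units, and the Cauchy--Binet expansion of $\det(BDA)\equiv 0$ together with the Borel lemma yields exactly what Fujimoto calls the property $(P_{2n+2,\,n+1})$ for the tuple $([h_1],\dots,[h_{2n+2}])$ in $\mathcal{H}^*/\mathbb{C}^*$. Your final paragraph is also fine (and in fact only requires linear non-degeneracy of $g$; it is Proposition~\ref{prop:[h1]=...=[h(n+2)]_and..._imply_f=g} here). The problem is the middle step.

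The sentence ``Granting that generic choice of the hyperplanes rules out such coincidences, the surviving subsum structure makes all ratios of the units $\prod_{j\in S}h_j$ constant'' is not an argument; it is the entire content of the theorem. The Borel decomposition partitions the index set of $(n+1)$-subsets into minimal vanishing subsums, and within each block the ratios of the monomials $\prod_{j\in S}h_j$ are constant. Your swap argument tacitly assumes that two subsets $S,S'$ differing by a single swap always lie in the \emph{same} block, but nothing forces this: the partition is determined by the unknown multiplicative structure of the $h_j$, not by the combinatorics of subsets. Property $(P_{2n+2,\,n+1})$ alone only bounds $t:=\mathrm{rank}\{[h_1],\dots,[h_{2n+2}]\}\le n$; it does not give $t=0$, and there is no evident ``generic'' condition on the $c_S$ that collapses the partition, because the coefficients within each block must additionally satisfy a $\mathbb{C}$-linear relation whose shape depends on the unknown constants $\gamma_S$.

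The paper's route is substantially different and explains where algebraic non-degeneracy and genericity genuinely enter. One introduces the Zariski closure $V_{f\times g}\subseteq\mathbb{P}^n\times\mathbb{P}^n$ of the image of $f\times g$; algebraic non-degeneracy of $g$ gives $\dim V_{f\times g}\ge n$, and a separate argument (Theorem~\ref{thm:[hi]doesnot(P2s,s)_implies_dimVfg<=n-s+t}) shows that failure of the finer property $(P_{2t+2,\,t+1})$ would force $\dim V_{f\times g}\le n-1$. Thus $(P_{2t+2,\,t+1})$ holds, and Fujimoto's combinatorial Lemmas~\ref{lem:1stCombiLem}--\ref{lem:3rdCombiLem} then classify $([h_1],\dots,[h_{2n+2}])$ up to two explicit types. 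A further determinant identity and an algebraic lemma (Lemma~\ref{lem:algebraic_lemma}) pin down the constants and force $t=n$; only at that point do explicit polynomial equations in the coefficients $a^i_j$ emerge, and \emph{these} equations define the proper algebraic set $V\subsetneq((\mathbb{P}^n)^*)^{2n+2}$ that the hyperplanes must avoid. In short, the genericity condition is not an a priori restriction on the Cauchy--Binet coefficients but the output of a long structural analysis that your sketch omits. Note also that your only invocation of algebraic non-degeneracy (``the vector $(g_0,\dots,g_n)$ never vanishes'') is really just the definition of a reduced representation; the genuine use of algebraic non-degeneracy is the dimension bound $\dim V_{f\times g}\ge n$, which your argument never touches.
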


I shall give an explanation of the word ``generic''. Denote by $(\mathbb{P}^n)^*$ the set of all hyperplanes in $\mathbb{P}^n.$ We identify $(\mathbb{P}^n)^*$ with $\mathbb{P}^n $ in a natural way. The ``generic'' in the condition of Theorem \ref{thm:generic_(2n+2)} means that the point $(H_1,\dots,H_{2n+2})\in \underset{(2n+2)\,\mbox{copies}}{\underbrace{(\mathbb{P}^n)^*\times\cdots\times(\mathbb{P}^n)^*}}$ is not in $ V,$ where $ V $ is a pre-given proper algebraic subset of $\underset{(2n+2)\,\mbox{copies}}{\underbrace{(\mathbb{P}^n)^*\times\cdots\times(\mathbb{P}^n)^*}}.$

Theorem \ref{thm:generic_(2n+2)} is a new and interesting result, so I write this note to state it and also give a complete proof of it. The proof of Theorem \ref{thm:generic_(2n+2)} is given in Section \ref{sec:uniqueness_theorem_generic_(2n+2)} and most parts of it use the same idea as in Fujimoto's proof, but the presentation is slightly different from Fujimoto's.

I emphasize that this manuscript is just a note but not an original research paper.

This note is organized as follows. In Section \ref{sec:Preliminaries_auxiliary results}, we recall some basic definitions and include some auxiliary results. In particular, we shall include some combinatorial lemmas due to Fujimoto. In Section \ref{sec:Vfg}, we prove a theorem concerning the dimension of the Zariski closure of the image of $ f\times g $ in $\mathbb{P}^n\times\mathbb{P}^n.$ This theorem and its proof are used essentially in Section \ref{sec:uniqueness_theorem_generic_(2n+2)}. In Section \ref{sec:Two_propositions}, we prove two propositions which will be used in Section \ref{sec:uniqueness_theorem_generic_(2n+2)}. In Section \ref{sec:uniqueness_theorem_generic_(2n+2)}, we prove the uniqueness theorem with generic $(2n+2)$ hyperplanes.

\section{Preliminaries and auxiliary results}          \label{sec:Preliminaries_auxiliary results}

\paragraph{2.1\, Meromorphic maps and hyperplanes in complex projective spaces.}

Let $ f $ be a meromorphic map of $\mathbb{C}^m $ into $\mathbb{P}^n.$ Denote by $ I(f)$ the indeterminacy locus of $ f,$ which is an analytic subset of $\mathbb{C}^m $ of codimension $\geq 2.$ A \emph{reduced representation} of $ f $ is a $(n+1)$-tuple $(f_0,\dots, f_n)$ of holomorphic functions on $\mathbb{C}^m $ such that
\[
   \{f_0=f_1=\dots=f_n=0\}= I(f)
\]
and
\[
   f(z)=[f_0(z):\dots:f_n(z)] \quad \forall z\in\mathbb{C}^m\setminus I(f).
\]
Any meromorphic map of $\mathbb{C}^m $ into $\mathbb{P}^n $ has a reduced representation.

We also recall the following definition.
\begin{defn}        \label{defn:linear_and_algebraic_nondegeneracy}
A meromorphic map of $\mathbb{C}^m $ into $\mathbb{P}^n $ is said to be \emph{linearly} (resp. \emph{algebraically}) \emph{non-degenerate}, if its image is not contained in any hyperplane (resp. hypersurface) in $\mathbb{P}^n.$
\end{defn}

Let
\[
   H=\big\{[x_0:\dots:x_n]\in\mathbb{P}^n\,|\, a_0x_0+\dots+a_nx_n=0\big\}
\]
be a hyperplane in $\mathbb{P}^n $ where $(a_0,\dots,a_n)\in\mathbb{C}^{n+1}\setminus\{(0,\dots,0)\}.$ We shall say that the linear form $ a_0X_0+\dots+a_nX_n $ defines the hyperplane $ H.$

\begin{defn}    \label{defn:hyperplanes_in_general_position}
Let $\{H_j\}_{j=1}^q $ be a family of hyperplanes in $\mathbb{P}^n.$ Take a linear form $ L_j=a^j_0 X_0+\dots+a^j_n X_n $ that defines $ H_j $ for each $ 1\leq j\leq q.$ The family $\{H_j\}_{j=1}^q $ of hyperplanes is said to be \emph{in general position}, if, for any distinct indices $ i_1,\dots, i_k\in \{1,\dots, q\}$ with $ 1\leq k\leq n+1,$ the linear forms $ L_{i_1},\dots, L_{i_k}$ are linearly independent over $\mathbb{C}.$
\end{defn}

Let $\{H_j\}_{j=1}^q $ and $\{L_j\}_{j=1}^q $ be as above. If $ q\geq n+1,$ then $\{H_j\}_{j=1}^q $ is in general position if and only if
\[
   \det\big(a^{i_k}_0,\dots,a^{i_k}_n;\, 1\leq k\leq n+1\big)\neq 0
\]
for any $(n+1)$ distinct indices $ i_1,\dots, i_{n+1}\in\{1,\dots,q\}.$

\begin{defn}      \label{defn:pullback_divisor}
Let $ f:\mathbb{C}^m\to\mathbb{P}^n $ be a meromorphic map and let $ H $ be a hyperplane in $\mathbb{P}^n.$ By taking a reduced representation $(f_0,\dots,f_n)$ of $ f $ and taking a linear form $ a_0 X_0+\dots+a_n X_n $ that defines $ H,$ we define
 \[
    (f,H):=a_0f_0+\dots+a_nf_n,
 \]
which is a holomorphic function on $\mathbb{C}^m.$ When $ f(\mathbb{C}^m)\not\subseteq H,$ the pullback divisor $ f^*H $ is defined to be the zero divisor of the holomorphic function $(f,H),$ which is independent of the different choices of reduced representations of $ f $ and linear forms that define $ H.$
\end{defn}

\paragraph{2.2\, The Borel Lemma.}
\begin{thm}[Borel Lemma, see \cite{Borel1897} and Corollary 4.2 in \cite{Fujimoto75}]       \label{thm:Borel_Lemma}
Let $ h_1,\dots, h_q $ be nowhere zero holomorphic functions on $\mathbb{C}^m $ which satisfy the following equation:
 \[
    c_1h_1+\dots+c_qh_q \equiv 0,
 \]
where $ c_1,\dots, c_q $ are nonzero complex numbers. Then, for each $ h_i,$ there is some $ h_j $ with $ j\neq i $ such that $ h_j/h_i $ is constant.
\end{thm}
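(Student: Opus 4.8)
The plan is to first reduce the statement to the weaker assertion that \emph{some} two of the $h_j$ are proportional, and then to prove that assertion by restricting to a single variable and invoking the classical unit theorem of Borel. Since $\mathbb{C}^m$ is simply connected and each $h_i$ is nowhere zero, I would write $h_i=e^{g_i}$ with $g_i$ entire, so that $h_i/h_j$ is constant precisely when $g_i-g_j$ is constant. Call $i$ and $j$ \emph{proportional} when $h_i/h_j$ is constant; this is an equivalence relation on $\{1,\dots,q\}$, with classes $C_1,\dots,C_r$. For $i\in C_t$ write $h_i=\lambda_i\phi_t$ with $\lambda_i\in\mathbb{C}^*$ and a fixed representative $\phi_t$ (taking $\phi_t=h_i$ and $\lambda_i=1$ on singletons). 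Grouping the relation $\sum_i c_ih_i\equiv0$ gives $\sum_{t=1}^r d_t\phi_t\equiv0$ with $d_t=\sum_{i\in C_t}c_i\lambda_i$ and with $\phi_1,\dots,\phi_r$ \emph{pairwise non-proportional}. Hence it suffices to prove the statement $(\ast)$: pairwise non-proportional, nowhere-zero holomorphic functions on $\mathbb{C}^m$ are linearly independent over $\mathbb{C}$. Granting $(\ast)$, every $d_t$ vanishes; but if some class $C_t=\{i\}$ were a singleton then $d_t=c_i\neq0$, a contradiction, so no class is a singleton, which is exactly the desired conclusion that every index has a proportional partner.

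Next I would reduce $(\ast)$ to the case $m=1$. Suppose $\phi_1,\dots,\phi_r$ are pairwise non-proportional and nowhere zero on $\mathbb{C}^m$ but satisfy a nontrivial relation, and write $\phi_t=e^{\gamma_t}$. Each difference $\gamma_s-\gamma_t$ ($s\neq t$) is a nonconstant entire function, so some partial derivative $\partial(\gamma_s-\gamma_t)/\partial z_k$ is not identically zero. Consequently, for a generic affine line $L=\{a+\zeta b:\zeta\in\mathbb{C}\}$ the directional derivative $\langle b,\nabla(\gamma_s-\gamma_t)\rangle$ is not identically zero on $L$ for \emph{every} pair $s\neq t$ simultaneously. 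For such a line the restrictions $\phi_t|_L$ are again nowhere zero, remain pairwise non-proportional, and still satisfy the same nontrivial relation, producing a one-variable violation of $(\ast)$. Thus it is enough to treat $m=1$.

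The surviving one-variable statement is exactly Borel's unit theorem, and this is where the genuine difficulty lies. The naive approach---divide by one function and differentiate to kill the constant term and lower the number of summands---fails because differentiation destroys the zero-free property (the derivative of a unit is not a unit); indeed $(\ast)$ is simply \emph{false} for general holomorphic functions, as $\{1,\zeta,\zeta-1\}$ shows, so no purely algebraic induction can succeed and a value-distribution input is unavoidable. I would supply it through Nevanlinna theory. After replacing the relation by a minimal one, so that all coefficients are nonzero, the holomorphic curve $[\phi_1:\dots:\phi_r]:\mathbb{C}\to\mathbb{P}^{r-1}$ lands in the hyperplane $\Pi\cong\mathbb{P}^{r-2}$ cut out by the relation, and there the $r$ coordinate hyperplanes restrict to $r$ hyperplanes in general position which the curve \emph{omits} (the $\phi_t$ are nowhere zero). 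Since a linearly non-degenerate holomorphic curve into $\mathbb{P}^{r-2}$ can omit at most $(r-2)+1=r-1$ hyperplanes in general position by the Second Main Theorem, the curve must be linearly degenerate in $\Pi$, yielding a further relation; iterating this downward reduces the effective number of functions until a two-term relation, hence a proportional pair, appears, contradicting non-proportionality. This one-variable core is classical, due to Borel \cite{Borel1897} and recorded as Corollary~4.2 in \cite{Fujimoto75}, and I would cite it rather than reprove it. In summary, the reductions in the first two paragraphs are routine, and the main obstacle is precisely this one-variable unit theorem, whose proof cannot be purely combinatorial and requires the value-distribution estimate above.
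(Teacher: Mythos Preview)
The paper does not supply a proof of this theorem at all: it is stated as a background result with references to Borel \cite{Borel1897} and to Corollary~4.2 of \cite{Fujimoto75}, and is then used as a black box. So there is no ``paper's own proof'' against which to compare your attempt.

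That said, your outline is a correct and standard route to the Borel Lemma. The reduction via proportionality classes is clean and shows precisely why the weaker assertion~$(\ast)$ (linear independence of pairwise non-proportional units) suffices: a singleton class would force a nonzero coefficient $d_t=c_i$. The restriction-to-a-line step is also fine; you only need that for each pair $s\neq t$ the set of lines on which $\gamma_s-\gamma_t$ becomes constant is thin, and then a single generic line avoids all finitely many such thin sets simultaneously. For the one-variable core you correctly identify that a purely formal induction cannot work and that a value-distribution input is needed; your sketch via the Second Main Theorem (the curve lands in a hyperplane $\Pi\cong\mathbb{P}^{r-2}$, the $r$ coordinate hyperplanes cut $\Pi$ in $r$ hyperplanes in general position because all coefficients of the minimal relation are nonzero, and omission of $r>r-1$ such hyperplanes forces linear degeneracy) is the usual argument, though in practice one organizes the descent by passing to a minimal relation at each stage. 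Citing the classical result here, as you propose, is exactly what the paper itself does.
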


We now give some notations.
\begin{notation}
Denote by $\mathcal{H}^*=\mathcal{H}^*_m $ the multiplicative group of all nowhere zero holomorphic functions on $\mathbb{C}^m.$ We regard the set $\mathbb{C}^*$ of all nonzero complex numbers as a subgroup of $\mathcal{H}^*.$ For any $ h\in\mathcal{H}^*,$ we denote by $[h]$ the equivalence class in the quotient group $\mathcal{H}^*/\mathbb{C}^*$ that contains $ h.$
\end{notation}

We emphasize that the quotient group $\mathcal{H}^*/\mathbb{C}^*$ is a \emph{torsion-free} abelian group.

A corollary of the Borel Lemma is the following.
\begin{prop}[see Proposition 4.5 in \cite{Fujimoto75}]      \label{prop:multi._independe._implies_algebrai._independe.}
Let $\eta_1,\dots,\eta_t $ be nowhere zero holomorphic functions on $\mathbb{C}^m $ and assume they are multiplicatively independent, namely,
 \[
    \eta_1^{n_1}\cdot\eta_2^{n_2}\cdots\eta_t^{n_t}\not\in \mathbb{C}^*
 \]
for any $(n_1,\dots,n_t)\in\mathbb{Z}^t\setminus\{(0,\dots,0)\}.$ If a polynomial $ P(X_1,\dots, X_t)\in\mathbb{C}[X_1,\dots,X_t]$ satisfies
 \[
    P(\eta_1,\dots,\eta_t)\equiv 0,
 \]
then $ P(X_1,\dots, X_t)$ is the zero polynomial.
\end{prop}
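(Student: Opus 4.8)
The plan is to argue by contradiction, reducing the statement to a direct application of the Borel Lemma (Theorem~\ref{thm:Borel_Lemma}). Suppose $P(X_1,\dots,X_t)$ is \emph{not} the zero polynomial, and write it as the sum of its nonzero terms,
\[
   P(X_1,\dots,X_t)=\sum_{i=1}^q c_i X^{\alpha^{(i)}},
\]
where $c_1,\dots,c_q\in\mathbb{C}^*$, the multi-indices $\alpha^{(1)},\dots,\alpha^{(q)}\in\mathbb{Z}_{\geq 0}^t$ are pairwise distinct, and $X^{\alpha}:=X_1^{\alpha_1}\cdots X_t^{\alpha_t}$.

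Setting $h_i:=\eta^{\alpha^{(i)}}=\eta_1^{\alpha^{(i)}_1}\cdots\eta_t^{\alpha^{(i)}_t}$, I would first observe that each $h_i$ is a product of nowhere zero holomorphic functions raised to \emph{nonnegative} integer powers, hence $h_i\in\mathcal{H}^*$. The hypothesis $P(\eta_1,\dots,\eta_t)\equiv 0$ then reads $\sum_{i=1}^q c_i h_i\equiv 0$. The case $q=1$ is immediate: $c_1 h_1\equiv 0$ would force $h_1\equiv 0$, contradicting that $h_1$ is nowhere zero; so I may assume $q\geq 2$, which is exactly the setting in which the Borel Lemma is non-vacuous.

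Applying the Borel Lemma to $h_1,\dots,h_q$, for the function $h_1$ there is some index $j\neq 1$ with $h_j/h_1$ a nonzero constant, say $h_j/h_1=c\in\mathbb{C}^*$. Rewriting, $\eta^{\alpha^{(j)}-\alpha^{(1)}}\in\mathbb{C}^*$. Since the multi-indices are distinct, the integer vector $\alpha^{(j)}-\alpha^{(1)}\in\mathbb{Z}^t$ is nonzero, and this directly contradicts the multiplicative independence of $\eta_1,\dots,\eta_t$. Therefore $P$ must be the zero polynomial.

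I expect the argument to be essentially routine once set up. The only points requiring care are recording that the monomial functions $h_i$ genuinely lie in $\mathcal{H}^*$ (so that the Borel Lemma is applicable), and noting that distinctness of the multi-indices is precisely what produces a nonzero exponent vector, which is what is needed to invoke multiplicative independence and reach the contradiction.
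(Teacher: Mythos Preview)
Your proof is correct. Note that the paper does not actually supply its own proof of this proposition; it simply cites Fujimoto's original statement (Proposition~4.5 in \cite{Fujimoto75}). The argument you give---expanding $P$ into distinct monomials, applying the Borel Lemma to the resulting relation $\sum c_i\eta^{\alpha^{(i)}}\equiv 0$, and then using multiplicative independence to rule out $\eta^{\alpha^{(j)}-\alpha^{(1)}}\in\mathbb{C}^*$---is exactly the standard derivation and is what one finds in Fujimoto's paper as well.
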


\paragraph{2.3\, Combinatorial lemmas.}

Let $ G $ be a torsion-free abelian group. Let $ A=(\alpha_1,\dots,\alpha_q)$ be a $ q$-tuple of elements in $ G.$ We denote by $\langle\alpha_1,\dots, \alpha_q\rangle$ the subgroup of $ G $ generated by $\alpha_1,\dots, \alpha_q.$ Because $ G $ is torsion-free, $\langle\alpha_1,\dots, \alpha_q\rangle$ is free and is of finite rank. We denote by {\rm rank}$\{\alpha_1,\dots, \alpha_q\}$ the rank of the free abelian group $\langle\alpha_1,\dots, \alpha_q\rangle.$

The following definition is due to Fujimoto \cite{Fujimoto75}.
\begin{defn}      \label{defn:property_(Pr,s)}
Let $(G,\cdot)$ be a torsion-free abelian group. Let $ q\geq r>s\geq 1 $ be integers. A $ q$-tuple $ A=(\alpha_1,\dots,\alpha_q)$ of elements in $ G $ is said to \emph{have the property} $(P_{r,s})$ if arbitrarily chosen $ r $ elements $\alpha_{l(1)},\dots,\alpha_{l(r)}$ in $ A $ ($ 1\leq l(1)<\dots<l(r)\leq q $) satisfy the condition that, for any $ s $ distinct indices $ i_1,\dots, i_s \in\{1,\dots,r\},$ there exist distinct indices $ j_1,\dots, j_s \in\{1,\dots,r\}$ with $\{j_1,\dots,j_s\}\neq \{i_1,\dots,i_s\}$ such that
 \[
    \alpha_{l(i_1)}\cdot\alpha_{l(i_2)}\cdots \alpha_{l(i_s)}= \alpha_{l(j_1)}\cdot\alpha_{l(j_2)}\cdots \alpha_{l(j_s)}.
 \]
\end{defn}

Fujimoto \cite{Fujimoto75} proved the following result.
\begin{lem}[see Lemma 2.6 in \cite{Fujimoto75}]      \label{lem:1stCombiLem}
Let $(G,\cdot)$ be a torsion-free abelian group. Let $ A=(\alpha_1,\dots,\alpha_q)$ be a $ q$-tuple of elements in $ G $ that has the property $(P_{r,s}),$ where $ q\geq r>s\geq 1.$ Then there exist $(q-r+2)$ distinct indices $ i_1,\dots,i_{q-r+2}\in\{1,\dots,q\}$ such that
 \[
    \alpha_{i_1}=\alpha_{i_2}=\dots=\alpha_{i_{q-r+2}}.
 \]
\end{lem}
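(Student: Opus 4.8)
The plan is to exploit the torsion-freeness of $G$ by equipping the finitely generated subgroup $\langle\alpha_1,\dots,\alpha_q\rangle$ with a translation-invariant total order $\preceq$; this is possible because a finitely generated torsion-free abelian group is isomorphic to some $\mathbb{Z}^d$, which carries the lexicographic order (so $a\prec b$ implies $ac\prec bc$, and multiplying strict inequalities keeps them strict). Every product occurring below lies in this subgroup, so $\preceq$ lets me compare them. I would then list the \emph{distinct} values taken by $\alpha_1,\dots,\alpha_q$ as $V_1\succ V_2\succ\dots\succ V_t$, with respective multiplicities $M_1,\dots,M_t$ (so $\sum_j M_j=q$), set the prefix sums $P_j=M_1+\dots+M_j$, and let $k$ be the unique index with $P_{k-1}<s\le P_k$, i.e.\ $V_k$ is the value of the $s$-th largest element counted with multiplicity. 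The goal is to prove $M_k\ge q-r+2$, which immediately yields $q-r+2$ indices carrying the common value $V_k$.

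The heart of the argument is a single construction that turns a failure of the desired inequality into a violation of $(P_{r,s})$. Assuming $q-P_k\ge r-s$, I would form an $r$-element subtuple $A'=(\alpha_{l(1)},\dots,\alpha_{l(r)})$ consisting of the $s$ largest elements of $A$ together with any $r-s$ elements whose value is strictly smaller than $V_k$ (enough such elements exist precisely because $q-P_k\ge r-s$). Inside $A'$ the elements of value $\succeq V_k$ are exactly those $s$ largest ones, while every appended element is $\prec V_k$. Taking $I=\{i_1,\dots,i_s\}$ to be the positions of these $s$ largest elements, I would check that $\alpha_{l(i_1)}\cdots\alpha_{l(i_s)}$ is the \emph{unique} maximal $s$-fold product in $A'$: for any $s$-subset $\{j_1,\dots,j_s\}\neq I$, at least one factor $\succeq V_k$ is traded for one $\prec V_k$, and since a sum of $s$ factors each $\succeq V_k$ dominates $V_k^{\,s}$ which in turn strictly dominates any sum of $s$ factors each $\prec V_k$, one gets $\alpha_{l(j_1)}\cdots\alpha_{l(j_s)}\prec\alpha_{l(i_1)}\cdots\alpha_{l(i_s)}$. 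Hence no distinct $\{j_1,\dots,j_s\}$ can realize the equality demanded by $(P_{r,s})$, contradicting the hypothesis.

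Therefore $(P_{r,s})$ forces $q-P_k\le r-s-1$, that is $P_k\ge q-r+s+1$. Combining this with $P_{k-1}\le s-1$, which holds by the very choice of $k$, gives
\[
   M_k=P_k-P_{k-1}\ge (q-r+s+1)-(s-1)=q-r+2,
\]
so the $M_k\ge q-r+2$ indices $i$ with $\alpha_i=V_k$ furnish the required equal elements.

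I expect the main obstacle to be the uniqueness of the maximizer in the key construction: it is what makes $(P_{r,s})$ genuinely fail, and it is exactly here that torsion-freeness is indispensable, entering through the compatible total order that permits adding strict inequalities. A secondary point requiring care is the bookkeeping with the prefix sums $P_{k-1}<s\le P_k$, which must be arranged so that the boundary at position $s$ becomes strict once the smaller elements are appended, and so that the final counts combine to give \emph{exactly} $q-r+2$ rather than a weaker bound.
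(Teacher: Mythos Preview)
The paper does not supply its own proof of this lemma; it merely cites Fujimoto's original paper, so there is nothing to compare against on that front. Your argument is essentially correct and is in fact the standard way this lemma is proved: embed the finitely generated torsion-free group into a totally ordered abelian group, look at where the $s$-th largest element sits among the distinct values, and build a bad $r$-subtuple if the multiplicity at that value is too small.

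One sentence needs tightening. When you justify that the product over $I$ is the \emph{strict} maximum, you write that ``a sum of $s$ factors each $\succeq V_k$ dominates $V_k^{\,s}$ which in turn strictly dominates any sum of $s$ factors each $\prec V_k$''. That comparison is not the relevant one: a competing $s$-subset $J\neq I$ of $A'$ is typically a \emph{mixture} of elements from the top block and the bottom block, not $s$ elements all $\prec V_k$. The clean argument is the one you already set up in the preceding clause: write $J=(I\cap J)\cup(J\setminus I)$ and $I=(I\cap J)\cup(I\setminus J)$; then $|I\setminus J|=|J\setminus I|=m\ge 1$, every element indexed by $I\setminus J$ has value $\succeq V_k$, every element indexed by $J\setminus I$ has value $\prec V_k$, so pairing them up and using translation-invariance of $\preceq$ gives
\[
   \prod_{p\in I\setminus J}\alpha_{l(p)}\;\succ\;\prod_{p\in J\setminus I}\alpha_{l(p)},
\]
and multiplying both sides by the common factor $\prod_{p\in I\cap J}\alpha_{l(p)}$ yields the desired strict inequality. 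With this correction, the rest of your counting ($P_{k-1}\le s-1$, $P_k\ge q-r+s+1$, hence $M_k\ge q-r+2$) goes through exactly as written.
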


To state another lemma, we need the following notation.
\begin{notation}
For elements $\alpha_1,\alpha_2,\dots,\alpha_q $, $\tilde{\alpha}_1,\tilde{\alpha}_2,\dots,\tilde{\alpha}_q $ in an abelian group $(G,\cdot),$ by the notation
 \[
    \alpha_1:\alpha_2:\dots:\alpha_q=\tilde{\alpha}_1:\tilde{\alpha}_2:\dots:\tilde{\alpha}_q,
 \]
we mean that $\alpha_i=\beta\tilde{\alpha}_i $ ($ 1\leq i\leq q $) for some element $\beta\in G.$
\end{notation}

Now we recall the following lemma.
\begin{lem}[see Lemma 3.6 in \cite{Fujimoto76} and \cite{zk23_ANote}]      \label{lem:3rdCombiLem}
Let $ s $ and $ q $ be integers with $ 2\leq s<q\leq 2s.$ Let $(G,\cdot)$ be a torsion-free abelian group. Let $ A=(\alpha_1,\dots,\alpha_q)$ be a $ q$-tuple of elements in $ G $ that has the property $(P_{q,s}),$ and assume that at least one $\alpha_i $ equals the unit element $ 1 $ of $ G.$ Then
 \begin{enumerate}[\rm (i)]
   \item {\rm rank}$\{\alpha_1,\dots,\alpha_q\}=:t\leq s-1;$
   \item if $ t=s-1,$ then $ q=2s $ and there is a basis $\{\beta_1,\dots,\beta_{s-1}\}$ of $\langle\alpha_1,\dots,\alpha_q\rangle$ such that the $\alpha_i $ are represented, after a suitable change of indices, as one of the following two types:
     \begin{itemize}
       \item[\rm (A)] $ s $ is odd and
            \[
               \alpha_1:\alpha_2:\dots:\alpha_{2s}= 1:1:\beta_1:\beta_1:\beta_2:\beta_2:\dots:\beta_{s-1}:\beta_{s-1};
            \]
       \item[\rm (B)] $\alpha_1:\alpha_2:\dots:\alpha_{2s}= 1:1:\dots:1:\beta_1:\dots:\beta_{s-1}: (\beta_1\cdots\beta_{a_1})^{-1}:(\beta_{a_1+1}\cdots\beta_{a_2})^{-1}:\dots:(\beta_{a_{k-1}+1}\cdots\beta_{a_k})^{-1},$
           \par where $ 0\leq k\leq s-1 $, $ 1\leq a_1<a_2<\dots<a_k\leq s-1,$ and the unit element $ 1 $ appears $(s+1-k)$ times in the right hand side.
     \end{itemize}
 \end{enumerate}
\end{lem}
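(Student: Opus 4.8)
The plan is to work additively in the free abelian group $F=\langle\alpha_1,\dots,\alpha_q\rangle\cong\mathbb{Z}^t$, writing the hypothesis $\alpha_i=1$ as $\alpha_q=0$, and to translate the multiplicative relations in the definition of $(P_{q,s})$ into a statement about order statistics under real linear functionals. The key observation I would establish first is the following reformulation: for every homomorphism $\lambda\colon F\to\mathbb{R}$, if one sorts the $q$ real numbers $\lambda(\alpha_1),\dots,\lambda(\alpha_q)$ in decreasing order as $v_{(1)}\ge\cdots\ge v_{(q)}$, then $v_{(s)}=v_{(s+1)}$. Indeed, choosing an $s$-subset $S$ that maximizes $\sum_{i\in S}\lambda(\alpha_i)$ and applying $(P_{q,s})$ to $S$ produces a distinct $s$-subset $T$ with $\sum_{i\in S}\alpha_i=\sum_{i\in T}\alpha_i$, hence $T$ also maximizes; so the maximizing $s$-subset is never unique, which is exactly the condition $v_{(s)}=v_{(s+1)}$ (``no clean cut at level $s$'').

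For part (i), I would package all these functionals into one subspace. Let $W\subseteq\mathbb{R}^q$ be spanned by $\mathbf 1=(1,\dots,1)$ together with all vectors $(\lambda(\alpha_1),\dots,\lambda(\alpha_q))$, $\lambda\in\mathrm{Hom}(F,\mathbb{R})$. Since $\alpha_q=0$ forces $\mathbf 1$ to be independent of the functional part, $\dim W=t+1$, and by the reformulation every $x\in W$ has no clean cut at level $s$ (adding a constant shifts all coordinates equally). It therefore suffices to prove the linear-algebra statement: if $q\le 2s$ and $W\subseteq\mathbb{R}^q$ is a subspace all of whose elements satisfy $x_{(s)}=x_{(s+1)}$, then $\dim W\le s$; this yields $t+1\le s$, i.e.\ $\mathrm{rank}\{\alpha_1,\dots,\alpha_q\}\le s-1$. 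To prove the statement I would restrict the $q$ coordinate functionals to $W$, group the coincident ones into $m'$ distinct classes with multiplicities $\nu_1,\dots,\nu_{m'}$ (so $\sum\nu_a=q$ and $m'\ge\dim W$), and note that for generic $x$ the sorted coordinates split into $m'$ blocks; ``no clean cut at $s$'' then says that no linearly separable union of classes has total multiplicity exactly $s$. Assuming $\dim W\ge s+1$ gives $m'\ge s+1$ classes of total multiplicity $q\le 2s$, so the excess $\sum(\nu_a-1)\le s-1$ is small, and a peeling/convexity argument (isolating extreme classes one at a time and using the richness of realizable orderings) should produce a separable union of classes of total multiplicity $s$ --- the desired contradiction.

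For part (ii), I would analyze the equality case $t=s-1$, i.e.\ $\dim W=s$, where the multiplicity bookkeeping above is tight. Tightness should first force $q=2s$ (ruling out $s<q<2s$) and pin down the multiplicity vector $(\nu_a)$ and the linear relations among the $\alpha_i$ almost completely. I expect a clean dichotomy governed by parity: either the distinct nonzero values are linearly independent, in which case $v_{(s)}=v_{(s+1)}$ can hold for all $\lambda$ only when every class has multiplicity exactly $2$ and $s$ is odd --- this is type (A) --- or there is essentially one nontrivial additive relation among the values, forcing the block of copies of $0$ to be large and the remaining values to be $\beta_1,\dots,\beta_{s-1}$ together with negatives of partial sums $-(\beta_{a_{k-1}+1}+\cdots+\beta_{a_k})$, which is type (B). After choosing the basis $\{\beta_1,\dots,\beta_{s-1}\}$ of $F$ and reindexing, the representation in the two displayed forms follows.

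The main obstacle I anticipate is the rigidity classification in part (ii): turning the soft inequality $\dim W\le s$ into the exact combinatorial normal forms (A) and (B) requires controlling simultaneously the multiplicities, the single admissible linear relation, and the parity constraint, and checking that no other configuration can satisfy ``no clean cut at level $s$'' for every functional. The separability/peeling step at the end of part (i) is the secondary technical point; there I would lean on $q\le 2s$ (which bounds the total multiplicity, and hence the excess) and, where convenient, on Lemma \ref{lem:1stCombiLem} applied with $r=q$, which already guarantees that at least two of the $\alpha_i$ coincide and thus seeds the block structure.
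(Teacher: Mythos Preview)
The paper does not prove this lemma; it is stated with a citation to Lemma~3.6 of Fujimoto's 1976 paper and to a companion note, and no argument is given in the present manuscript. So there is no in-paper proof to compare against directly.

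Your approach via real linear functionals and order statistics is a genuine and interesting reframing, quite different from Fujimoto's combinatorial treatment. The key reformulation is correct: if $(P_{q,s})$ holds, then for every $\lambda\in\mathrm{Hom}(F,\mathbb{R})$ the sorted values satisfy $v_{(s)}=v_{(s+1)}$, since otherwise the $s$-subset of largest values would be the \emph{unique} maximizer of $\sum\lambda(\alpha_i)$, contradicting the existence of a second $s$-subset with the same $\alpha$-sum. This one-sided implication suffices for part~(i), and the passage to the subspace $W\subseteq\mathbb{R}^q$ with $\dim W=t+1$ is clean.

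That said, the proposal is only a sketch at the decisive points. For~(i), the ``peeling/convexity argument'' meant to extract from $m'\ge s+1$ distinct coordinate classes with $\sum\nu_a\le 2s$ a linearly separable subfamily of total multiplicity exactly $s$ is not supplied; this separability step is the whole content, and it is not obvious when some classes lie in the convex hull of others on $W$. For~(ii), the argument is largely aspirational (``I expect a clean dichotomy'', ``tightness should first force $q=2s$''). There is also a structural worry: the ``no clean cut for every $\lambda$'' condition is a priori \emph{weaker} than $(P_{q,s})$, so a classification of its equality cases could in principle contain spurious configurations that you would still need to eliminate using the full strength of $(P_{q,s})$. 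Fujimoto's original proof is more directly combinatorial, working with the product relations themselves and proceeding by case analysis and induction on the forced coincidences, which is how the precise (A)/(B) dichotomy and the parity constraint in~(A) emerge. Your linear-functional route may well be completable, but as written the two hardest steps remain unproved.
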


We also have the following easy observation.
\begin{observation}      \label{obsv:two_proportional_tuples_with_unit_generate_same_subgroup}
Let $(\alpha_1,\dots,\alpha_q)$ and $(\tilde{\alpha}_1,\dots,\tilde{\alpha}_q)$ be two $ q$-tuples of elements in a torsion-free abelian group $(G,\cdot).$ Assume that there exist indices $ i_0 $ and $ j_0 $ such that $\alpha_{i_0}=\tilde{\alpha}_{j_0}=1,$ and there is an element $\beta\in G $ such that $\alpha_i= \beta\tilde{\alpha}_i $ for any $ 1\leq i\leq q.$ Then
 \[
    \langle\alpha_1,\dots,\alpha_q \rangle= \langle\tilde{\alpha}_1,\dots,\tilde{\alpha}_q \rangle.
 \]
\end{observation}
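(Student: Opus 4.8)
The plan is to prove the equality of subgroups by establishing the two inclusions, the key device being to use the two normalizing conditions $\alpha_{i_0}=1$ and $\tilde\alpha_{j_0}=1$ to locate the conversion factor $\beta$ inside each subgroup.

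First I would pin down $\beta$ in two different ways. Evaluating the relation $\alpha_i=\beta\tilde\alpha_i$ at $i=i_0$ and using $\alpha_{i_0}=1$ gives $1=\beta\tilde\alpha_{i_0}$, hence $\beta=\tilde\alpha_{i_0}^{-1}\in\langle\tilde\alpha_1,\dots,\tilde\alpha_q\rangle$. Evaluating instead at $i=j_0$ and using $\tilde\alpha_{j_0}=1$ gives $\alpha_{j_0}=\beta\cdot 1$, hence $\beta=\alpha_{j_0}\in\langle\alpha_1,\dots,\alpha_q\rangle$. Thus the single element $\beta$ lies in both subgroups simultaneously, which is the crux of the whole statement.

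With this in hand the two inclusions are immediate. For $\langle\alpha_1,\dots,\alpha_q\rangle\subseteq\langle\tilde\alpha_1,\dots,\tilde\alpha_q\rangle$, each generator satisfies $\alpha_i=\beta\tilde\alpha_i$, and both factors lie in $\langle\tilde\alpha_1,\dots,\tilde\alpha_q\rangle$ (the factor $\beta$ because $\beta=\tilde\alpha_{i_0}^{-1}$), so every $\alpha_i$ is in that subgroup. For the reverse inclusion I would rewrite $\tilde\alpha_i=\beta^{-1}\alpha_i$ and note that both factors now lie in $\langle\alpha_1,\dots,\alpha_q\rangle$ (using $\beta=\alpha_{j_0}$). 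Since each generating set sits inside the subgroup generated by the other, the two subgroups coincide.

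The argument uses nothing beyond the abelian group axioms, so there is no genuine obstacle here; the only point that must not be overlooked is that the two unit assumptions are precisely what force $\beta$ into each subgroup. If one dropped either $\alpha_{i_0}=1$ or $\tilde\alpha_{j_0}=1$, then $\beta$ could fail to lie in the corresponding subgroup and the conclusion would break down, so I would take care to invoke each unit exactly once.
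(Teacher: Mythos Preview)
Your argument is correct. The paper itself states this as an ``easy observation'' and gives no proof, so there is nothing to compare against; your approach---extracting $\beta=\tilde\alpha_{i_0}^{-1}$ and $\beta=\alpha_{j_0}$ from the two unit conditions and then checking the mutual inclusions of generators---is precisely the natural verification the author had in mind.
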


\section{The Zariski closure of the image of $f\times g$ in $\mathbb{P}^n\times\mathbb{P}^n$}     \label{sec:Vfg}

Let $ f $ and $ g $ be two meromorphic maps of $\mathbb{C}^m $ into $\mathbb{P}^n $ and let $\{H_j\}_{j=1}^{2n+2}$ be a family of hyperplanes in $\mathbb{P}^n $ in general position. Assume that $ f(\mathbb{C}^m)\not\subseteq H_j $, $ g(\mathbb{C}^m)\not\subseteq H_j,$ and $ f^*(H_j)=g^*(H_j)$ for $ 1\leq j\leq 2n+2.$

Let $ f\times g $ be the holomorphic map of $\mathbb{C}^m\setminus\big(I(f)\cup I(g)\big)$ into $\mathbb{P}^n\times\mathbb{P}^n $
that is given by $(f\times g)(z)=(f(z),g(z))$ for $ z\in\mathbb{C}^m\setminus\big(I(f)\cup I(g)\big),$
where $ I(f)$ and $ I(g)$ are the indeterminacy loci of $ f $ and $ g,$ respectively.

\begin{defn}      \label{defn:Vfg}
We define $ V_{f\times g}$ to be the Zariski closure of the image of $ f\times g $ in $\mathbb{P}^n\times\mathbb{P}^n,$
namely, the intersection of all algebraic sets in $\mathbb{P}^n\times\mathbb{P}^n $ which contain the image of $ f\times g.$
\end{defn}

We have the following easy proposition.
\begin{prop}    \label{prop:Vfg_is_irreducible}
The algebraic set $ V_{f\times g}$ is irreducible.
\end{prop}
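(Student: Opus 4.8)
The plan is to show that $V_{f\times g}$ is irreducible by exploiting the fact that it is the Zariski closure of the image of a holomorphic map whose \emph{domain is irreducible}. The domain $\mathbb{C}^m\setminus\big(I(f)\cup I(g)\big)$ is the complement of an analytic subset of codimension $\geq 2$ in $\mathbb{C}^m$, and hence is a connected (indeed irreducible) complex manifold. The guiding principle is that the Zariski closure of the image of an irreducible space under a suitable (holomorphic) map is irreducible.

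First I would suppose, for contradiction, that $V_{f\times g}$ is reducible and write $V_{f\times g}=W_1\cup W_2$, where $W_1$ and $W_2$ are algebraic subsets of $\mathbb{P}^n\times\mathbb{P}^n$ with neither containing the other. Let $U:=\mathbb{C}^m\setminus\big(I(f)\cup I(g)\big)$ and consider the two preimages $S_i:=(f\times g)^{-1}(W_i)$ for $i=1,2$. Since each $W_i$ is (locally) the common zero set of finitely many polynomials in the homogeneous coordinates, and $f\times g$ is holomorphic, pulling these polynomials back along a reduced representation of $f$ and of $g$ exhibits each $S_i$ as an analytic subset of $U$. Because $V_{f\times g}$ is the Zariski closure of the image, we have $U=S_1\cup S_2$, i.e. $U$ is covered by two analytic subsets.

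The decisive step is then to invoke the irreducibility of $U$ as a complex manifold: a connected complex manifold cannot be written as the union of two proper analytic subsets. Indeed, if $S_1\neq U$, then $S_1$ is a proper analytic subset, so its complement $U\setminus S_1$ is a nonempty open set on which $f\times g$ maps entirely into $W_2$; since $U\setminus S_1$ is itself a nonempty open subset of the connected manifold $U$ and $W_2$ is Zariski closed, the image of \emph{all} of $U$ must land in $W_2$ (by analytic continuation / the identity theorem applied to the defining polynomials), forcing $V_{f\times g}\subseteq W_2$, contrary to the choice of $W_2\subsetneq V_{f\times g}$. Symmetrically $S_2=U$ is likewise impossible, and this contradiction establishes irreducibility.

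The main obstacle I anticipate is making rigorous the passage ``the image lies in $W_2$ on a nonempty open set, hence everywhere.'' This requires that the pullback of each defining polynomial of $W_2$, which is a holomorphic function on $U$ vanishing on the nonempty open set $U\setminus S_1$, must vanish identically on the connected manifold $U$. This is exactly the identity theorem for holomorphic functions on a connected complex manifold, so the argument is clean provided one has established that $U$ is connected. Thus the one point needing care is to record that $\mathbb{C}^m\setminus\big(I(f)\cup I(g)\big)$ is connected, which follows since removing an analytic subset of codimension $\geq 2$ (equivalently, of real codimension $\geq 4$) from the connected manifold $\mathbb{C}^m$ leaves it connected.
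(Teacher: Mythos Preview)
Your proposal is correct and follows essentially the same argument as the paper: both assume a reducible decomposition $V_{f\times g}=W_1\cup W_2$, pull back to obtain analytic subsets covering $U=\mathbb{C}^m\setminus(I(f)\cup I(g))$, and derive a contradiction from the irreducibility (connectedness) of $U$. Your version merely unpacks the step ``by the definition of $V_{f\times g}$ the preimages are proper'' via the identity theorem, which is exactly the content behind the paper's appeal to the irreducibility of $U$.
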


\begin{proof}
Assume that $ V_{f\times g}$ is reducible, namely, $ V_{f\times g}=V_1\cup V_2 $ for two proper algebraic subsets $ V_1 $ and $ V_2 $ of $ V_{f\times g}.$
Then $ A_1:=(f\times g)^{-1}(V_1)$ and $ A_2:=(f\times g)^{-1}(V_2)$ are both analytic subsets of $\mathbb{C}^m\setminus\big(I(f)\cup I(g)\big),$ and
\[
   \mathbb{C}^m\setminus\big(I(f)\cup I(g)\big)=A_1\cup A_2.
\]
By the definition of $ V_{f\times g},$ we see that $ A_1 $ and $ A_2 $ are both proper analytic subsets of $\mathbb{C}^m\setminus\big(I(f)\cup I(g)\big),$ which contradicts the fact that $\mathbb{C}^m\setminus\big(I(f)\cup I(g)\big)$ is irreducible. This proves Proposition \ref{prop:Vfg_is_irreducible}.
\end{proof}

Take reduced representations $(f_0,\dots,f_n)$ and $(g_0,\dots,g_n)$ of $ f $ and $ g,$ respectively. Take a linear form $ a^j_0 X_0+\dots+a^j_n X_n $ that defines $ H_j $ for each $ 1\leq j\leq 2n+2.$ Define, for each $ 1\leq i\leq 2n+2,$
\begin{equation}      \label{equ:definition_of_hi}
   h_i:=\frac{(f,H_i)}{(g,H_i)},
\end{equation}
where
\[
   (f,H_i)=a^i_0 f_0+\dots+a^i_n f_n \quad\mbox{and}\quad (g,H_i)=a^i_0 g_0+\dots+a^i_n g_n.
\]
By assumption, each $ h_i $ is a nowhere zero holomorphic function on $\mathbb{C}^m.$ By choosing a new reduced representation of $ f $ if necessary, we may assume that at least one $ h_i $ is constant.

Consider the $(2n+2)$-tuple $([h_1],\dots,[h_{2n+2}])$ of elements in $\mathcal{H}^*/\mathbb{C}^*.$
Define
\[
   t:={\rm rank}\{[h_1],\dots,[h_{2n+2}]\}.
\]
(We refer the reader to Section \ref{sec:Preliminaries_auxiliary results} for notations.)

\begin{remark}       \label{remark:t_is_independent_of_choices_of_representations}
The number $ t $ is independent of the different choices of reduced representations of $ f $ and $ g $ and linear forms that define $ H_j $'s, as long as at least one $ h_j $ is constant.
\end{remark}

Using Fujimoto's method in \cite{Fujimoto75}, one has the following.
\begin{prop}        \label{prop:hi_has_property_P(2n+2,n+1)}
The $(2n+2)$-tuple $([h_1],\dots,[h_{2n+2}])$ has the property $(P_{2n+2,n+1}).$
\end{prop}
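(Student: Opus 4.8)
The plan is to deduce the property $(P_{2n+2,n+1})$ from the Borel Lemma (Theorem~\ref{thm:Borel_Lemma}). Unwinding Definition~\ref{defn:property_(Pr,s)} with $r=q=2n+2$ and $s=n+1$, what must be shown is that for every choice of $n+1$ indices $I\subseteq\{1,\dots,2n+2\}$ there is a distinct $(n+1)$-element set $J$ with $\prod_{i\in I}h_i=c\prod_{j\in J}h_j$ for some $c\in\mathbb{C}^*$; equivalently $\big[\prod_{i\in I}h_i\big]=\big[\prod_{j\in J}h_j\big]$ in $\mathcal H^*/\mathbb C^*$. Since each $h_i$, hence each product $h_S:=\prod_{i\in S}h_i$, is nowhere zero, the natural route is to manufacture, for the given $I$, a $\mathbb C$-linear relation $\sum_S c_S\,h_S\equiv0$ ranging over $(n+1)$-subsets $S$, with $I$ occurring among them and all $c_S\neq0$; the Borel Lemma then yields, for the term $S=I$, another term $S=J$ with $h_I/h_J$ constant, which is exactly the assertion for $I$.

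The engine producing such relations is the elementary determinantal identity obtained by writing $(f,H_i)=h_i\,(g,H_i)$: setting $\delta^{ij}_{pq}:=a^i_pa^j_q-a^i_qa^j_p$ and $W_{pq}:=f_pg_q-f_qg_p$, one has
\[
   (f,H_i)(g,H_j)-(f,H_j)(g,H_i)=(h_i-h_j)\,(g,H_i)(g,H_j)=\sum_{0\le p<q\le n}\delta^{ij}_{pq}W_{pq}
\]
for all $i,j$. Thus each difference $h_i-h_j$ is, up to the factor $(g,H_i)(g,H_j)$, a fixed $\mathbb C$-combination of the $\binom{n+1}{2}$ functions $W_{pq}$, with all coefficient vectors nonzero by general position. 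Forming products of these identities over index configurations in which every index is used equally often makes the $(g,H_\bullet)$-factors cancel in ratios, and reduces the goal to an identity of the shape $\prod_k (h_{i_k}-h_{j_k})=\mathrm{const}\cdot\prod_k(h_{i'_k}-h_{j'_k})$; expanding it (the $k$-th factors use disjoint index pairs, so each monomial is a genuine product of $n+1$ distinct $h$'s) produces exactly a constant-coefficient relation among the $h_S$ to which Borel applies. For $n=1$ this closes at once: there is a single $W_{01}$, so $m_{ij}:=(f,H_i)(g,H_j)-(f,H_j)(g,H_i)=\delta_{ij}W_{01}$ with $\delta_{ij}:=\delta^{ij}_{01}$, and comparing the two perfect matchings of $\{1,2,3,4\}$ gives the cross-ratio identity $(h_1-h_2)(h_3-h_4)=c\,(h_1-h_3)(h_2-h_4)$ with $c=\delta_{12}\delta_{34}/(\delta_{13}\delta_{24})\in\mathbb C^*\setminus\{1\}$, whose expansion is a relation among the six products $h_ih_j$ with all coefficients nonzero, yielding $(P_{4,2})$.

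Algebraic non-degeneracy of $g$ enters through Proposition~\ref{prop:multi._independe._implies_algebrai._independe.} and the fact that, because the image of $g$ is Zariski dense, the only $\mathbb C$-linear relations among the products $\prod_{i\in S}(g,H_i)$ are the polynomial syzygies of the forms $\prod_{i\in S}L_i$. This is what lets one certify that the scalar in a product-of-differences identity is a genuine constant, that it is nonzero, and that the partner set $J$ is forced to differ from $I$, so that the final relation among the $h_S$ is honestly of Borel type.

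The hard part is the general case $n\ge2$. Here $\dim_{\mathbb C}\mathrm{span}\{W_{pq}\}=\binom{n+1}{2}\ge3$, so a ratio of products of the linear combinations $\sum_{p<q}\delta^{ij}_{pq}W_{pq}$ is no longer automatically constant, and the naive matching cancellation that settles $n=1$ breaks down. Overcoming this is the crux of Fujimoto's method: one must exploit the rank-two (Plücker) structure of the decomposable tensor $(W_{pq})$ — reflecting that the $2\times(n+1)$ matrix of the $f_p,g_p$ has rank at most two — together with the non-degeneracy of $g$, to carry out the elimination and to verify that the resulting constant-coefficient relation among the $(n+1)$-fold products $h_S$ has all coefficients nonzero. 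I expect this elimination, rather than the final appeal to Borel, to be the main technical obstacle.
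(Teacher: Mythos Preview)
Your proposal does not close the argument for $n\ge 2$: you explicitly say the matching/cross-ratio trick breaks down there and speculate that the fix requires Pl\"ucker relations for the rank-two tensor $(W_{pq})$, but you do not carry this out. So as written there is a genuine gap, and in fact you have misidentified where the difficulty lies.

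Fujimoto's method (which the paper invokes here, and spells out in passing in the proof of Theorem~\ref{thm:[hi]doesnot(P2s,s)_implies_dimVfg<=n-s+t}) is a single-determinant argument that works uniformly for all $n$ and avoids the $W_{pq}$'s entirely. From $(f,H_i)=h_i\,(g,H_i)$ one has the $2n+2$ linear relations
\[
   \sum_{j=0}^n a^i_j f_j \;-\; h_i\sum_{j=0}^n a^i_j g_j \;=\;0,\qquad 1\le i\le 2n+2,
\]
so the nonzero vector $(f_0,\dots,f_n,g_0,\dots,g_n)$ lies in the kernel of the $(2n+2)\times(2n+2)$ matrix $\big(a^i_0,\dots,a^i_n,\,a^i_0 h_i,\dots,a^i_n h_i;\ 1\le i\le 2n+2\big)$, and hence its determinant vanishes identically. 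Laplace expansion along the first $n+1$ columns gives
\[
   0=\sum_{\substack{I\subseteq\{1,\dots,2n+2\}\\|I|=n+1}} \varepsilon_I\,\det\big(a^i_j\big)_{i\in I}\,\det\big(a^i_j\big)_{i\in I^c}\cdot\prod_{i\in I^c} h_i,
\]
where $I^c=\{1,\dots,2n+2\}\setminus I$. By general position every $(n+1)\times(n+1)$ minor $\det(a^i_j)_{i\in I}$ is nonzero, so all scalar coefficients are in $\mathbb{C}^*$. Each $\prod_{i\in I^c}h_i$ lies in $\mathcal H^*$, so the Borel Lemma applies directly: for any given $(n+1)$-set $K$ (take $I=K^c$) there is an $(n+1)$-set $L\neq K$ with $\big[\prod_{i\in K}h_i\big]=\big[\prod_{i\in L}h_i\big]$. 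This is exactly the property $(P_{2n+2,n+1})$.

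Two further remarks. First, your $n=1$ argument is a special case of the above: the determinant of the $4\times4$ matrix, Laplace-expanded, \emph{is} your cross-ratio relation among the six $h_ih_j$. Second, algebraic non-degeneracy of $g$ is not needed here and the paper does not assume it for this proposition; the only input is that $\{H_j\}$ is in general position (so all the minors are nonzero) and that each $h_i\in\mathcal H^*$.
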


From the above proposition and the conclusion (i) of Lemma \ref{lem:3rdCombiLem}, one gets the following.
\begin{prop}     \label{prop:0<=t<=n}
$ 0\leq t\leq n.$
\end{prop}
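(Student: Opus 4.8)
The plan is to obtain Proposition \ref{prop:0<=t<=n} as a direct application of conclusion (i) of Lemma \ref{lem:3rdCombiLem} to the tuple $A=([h_1],\dots,[h_{2n+2}])$ in the torsion-free abelian group $G=\mathcal{H}^*/\mathbb{C}^*$. The lower bound $t\geq 0$ is automatic, since $t$ is by definition the rank of a free abelian group and hence a nonnegative integer. So the entire content lies in the upper bound $t\leq n$.

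First I would record the numerical substitution needed to match the hypotheses of Lemma \ref{lem:3rdCombiLem}: set $q=2n+2$ and $s=n+1$. For $n\geq 1$ these satisfy the standing constraints $2\leq s<q\leq 2s$ of that lemma; indeed $s=n+1\geq 2$, we have $s<q$ because $n+1<2n+2$, and in fact $q=2n+2=2(n+1)=2s$, so we sit exactly at the boundary case $q=2s$. (The degenerate case $n=0$ may be dismissed separately, since $\mathbb{P}^0$ is a single point and the maps are then trivially equal.)

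Next I would verify the two remaining hypotheses of the lemma for our tuple. The group $G=\mathcal{H}^*/\mathbb{C}^*$ is torsion-free, as emphasized just after its definition in Section \ref{sec:Preliminaries_auxiliary results}. By Proposition \ref{prop:hi_has_property_P(2n+2,n+1)}, the tuple $A$ has the property $(P_{2n+2,n+1})=(P_{q,s})$. Finally, by the normalization made in the construction of the $h_i$ (we are free to choose a reduced representation of $f$ so that at least one $h_i$ is constant), at least one $[h_i]$ equals the unit element $1$ of $G$. With all hypotheses in place, conclusion (i) of Lemma \ref{lem:3rdCombiLem} gives
\[
   t={\rm rank}\{[h_1],\dots,[h_{2n+2}]\}\leq s-1=n,
\]
which is precisely the desired upper bound.

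The main obstacle is not in the argument itself, which is essentially a one-line invocation, but in confirming that the hypotheses of Lemma \ref{lem:3rdCombiLem} are genuinely met: in particular the boundary constraint $q\leq 2s$ (here attained with equality $q=2s$) and the requirement that at least one element of the tuple be the unit. The former is a clean arithmetic check, while the latter is exactly what the freedom to re-choose a reduced representation of $f$ guarantees, via the stipulation that some $h_i$ be constant. Once these two points are confirmed, the proposition follows immediately.
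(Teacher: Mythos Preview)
Your proposal is correct and follows exactly the route the paper takes: the paper derives Proposition \ref{prop:0<=t<=n} directly from Proposition \ref{prop:hi_has_property_P(2n+2,n+1)} together with conclusion (i) of Lemma \ref{lem:3rdCombiLem}, applied with $q=2n+2$, $s=n+1$, using the normalization that some $h_i$ is constant so that one $[h_i]=1$. Your verification of the numerical constraints $2\leq s<q\leq 2s$ (with equality $q=2s$) and your remark on $n=0$ are fine additional detail but not needed beyond what the paper states.
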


Now we give the main theorem of this section. Let the notations be as above.

\begin{thm}       \label{thm:[hi]doesnot(P2s,s)_implies_dimVfg<=n-s+t}
Suppose that the $(2n+2)$-tuple $([h_1],\dots,[h_{2n+2}])$ does not have the property $(P_{2s,s})$ for some positive integer $ s $ with $ 1\leq s\leq n+1.$ Then
 \[
    \max\{t, \dim V_{f\times g}\}\leq n-s+t.
 \]
\end{thm}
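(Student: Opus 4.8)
The plan is to convert $\dim V_{f\times g}$ into a transcendence‑degree computation over the field generated by the $h_i$, and to convert the failure of $(P_{2s,s})$ into the non‑vanishing of a single minor via the Borel Lemma. First I would dispose of the case $s=n+1$: by Proposition~\ref{prop:hi_has_property_P(2n+2,n+1)} the tuple $([h_1],\dots,[h_{2n+2}])$ \emph{has} the property $(P_{2n+2,n+1})$, so the hypothesis is vacuous there. Hence I may assume $1\le s\le n$, in which case $t\le n-s+t$ is automatic and only the bound $\dim V_{f\times g}\le n-s+t$ has content. Let $[\eta_1],\dots,[\eta_t]$ be a basis of $\langle[h_1],\dots,[h_{2n+2}]\rangle$; by Proposition~\ref{prop:multi._independe._implies_algebrai._independe.} the $\eta_l$ are algebraically independent, so $K:=\mathbb{C}(\eta_1,\dots,\eta_t)$ has $\mathrm{tr.deg}_{\mathbb{C}}K=t$ and each $h_i=c_i\eta_1^{m_{i1}}\cdots\eta_t^{m_{it}}\in K$.

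Next I would set up the reduction to a rank estimate. Using general position (Definition~\ref{defn:hyperplanes_in_general_position}) I take $H_1,\dots,H_{n+1}$ as a basis, write $L_i=\sum_{k\le n+1}c_{ik}L_k$ for $i\ge n+2$, and put $b_k=(g,H_k)/(g,H_1)$ and $\rho_i=h_i/h_1\in K$. Since $(f,H_i)=h_i(g,H_i)$, the function field $\mathbb{C}(V_{f\times g})$ is contained in $K(b_2,\dots,b_{n+1})$, so $\dim V_{f\times g}\le t+\mathrm{tr.deg}_K K(b_2,\dots,b_{n+1})$. Comparing $(f,H_i)=\sum_k c_{ik}(f,H_k)$ with $(f,H_i)=h_i\sum_k c_{ik}(g,H_k)$ yields, for each $i\ge n+2$, the $K$‑linear relation
\[
   \sum_{k=1}^{n+1} c_{ik}\,(\rho_i-\rho_k)\,b_k=0 \qquad (n+2\le i\le 2n+2).
\]
Thus $(b_1,\dots,b_{n+1})$ lies in the kernel of the $(n+1)\times(n+1)$ matrix $N=\big(c_{ik}(\rho_i-\rho_k)\big)$ over $K$, whence $\mathrm{tr.deg}_K K(b)\le n-\mathrm{rank}_K N$. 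It therefore suffices to prove $\mathrm{rank}_K N\ge s$.

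For the rank bound I would use the failure of $(P_{2s,s})$, which supplies $2s$ indices together with an $s$‑subset $I_0$ of them whose product $\prod_{i\in I_0}[h_i]$ differs from $\prod_{i\in J}[h_i]$ for every other $s$‑subset $J$ of the $2s$. Here I choose the basis \emph{adaptively}: by general position I may place $I_0$ inside $\{1,\dots,n+1\}$ and the complementary $s$ special indices $I_0'$ inside $\{n+2,\dots,2n+2\}$, and then examine the minor $\det N[I_0',I_0]$. Expanding this determinant and every factor $\rho_i-\rho_k$, each resulting term is a constant times a monomial $\prod_{i\in P}\rho_i$ for some $s$‑subset $P\subseteq I_0\cup I_0'$; since two such $K$‑monomials agree iff the corresponding $h$‑products agree (torsion‑freeness of $\mathcal{H}^*/\mathbb{C}^*$), the unique‑product property of $I_0$ prevents the monomial $\prod_{i\in I_0}\rho_i$ from cancelling against any other $P$. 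With $I_0$ aligned to the columns and $I_0'$ to the rows, the coefficient of that surviving monomial is a nonzero constant times $\det(c_{ik})_{i\in I_0',\,k\in I_0}$, and this latter determinant is nonzero because the $n+1$ forms $\{L_i:i\in I_0'\}\cup\{L_k:k\le n+1,\ k\notin I_0\}$ are linearly independent by general position. Hence $\det N[I_0',I_0]\ne 0$, so $\mathrm{rank}_K N\ge s$ and $\dim V_{f\times g}\le t+n-s$, which gives the claim.

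The hard part will be exactly this third step: controlling the expansion of $\det N[I_0',I_0]$ so that precisely one $h$‑product survives. The adapted choice of basis is what forces the surviving monomial to be the uniquely achieved product $\prod_{i\in I_0}\rho_i$, while general position is what keeps its constant coefficient from vanishing; verifying these two facts simultaneously — the non‑cancellation (from torsion‑freeness and the Borel‑type independence of monomials) and the non‑vanishing of the accompanying constant minor — is the real content, whereas the passage to $K$ and the relations defining $N$ are routine.
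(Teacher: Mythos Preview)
Your argument is correct and shares the same skeleton as the paper's—both extract from the failure of $(P_{2s,s})$ a nonvanishing determinant and use it to eliminate $s$ of the coordinates—but the packaging differs enough to be worth a comparison. The paper first changes coordinates so that $H_1,\dots,H_s,H_{2s+1},\dots,H_{n+s+1}$ become the coordinate hyperplanes, shows that the $2s\times 2s$ determinant $\det\big(a^i_j,\,a^i_jh_i;\ 1\le i\le 2s,\ 0\le j\le s-1\big)$ is nonzero via Laplace expansion and the Borel Lemma, and then applies Cramer's rule to solve for $g_0,\dots,g_{s-1}$ in terms of $g_s,\dots,g_n$ and $\eta_1,\dots,\eta_t$; it finishes geometrically by introducing a triple‑product variety $V_{f,g,\eta}\subset\mathbb{P}^n\times\mathbb{P}^n\times\mathbb{P}^t$ and exhibiting an injective morphism from a Zariski‑open subset of it into $\mathbb{C}^{n-s+t}$, reading off both $\dim V_{f\times g}\le n-s+t$ and $t\le n-s+t$ via the two projections. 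You instead work directly over $K=\mathbb{C}(\eta_1,\dots,\eta_t)$, recast the problem as bounding $\mathrm{rank}_K$ of the $(n+1)\times(n+1)$ matrix $N=\big(c_{ik}(\rho_i-\rho_k)\big)$, and place the special $2s$ indices as $I_0\cup I_0'$ so that a single $s\times s$ minor survives; the transcendence‑degree bookkeeping then yields the dimension bound in one line, while the inequality $t\le n-s+t$ drops out of your disposal of the case $s=n+1$. Your route is more algebraic and arguably tidier (smaller determinant, no auxiliary $\mathbb{P}^t$ factor), whereas the paper's explicit geometric set‑up is what gets reused verbatim in Step~4 of the proof of Theorem~\ref{thm:generic_(2n+2)_accurate}.
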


\begin{proof}
Clearly we have $ 1\leq s\leq n $ and $ t\geq 1.$

Take functions $\eta_1,\dots,\eta_t\in\mathcal{H}^*$ such that $\{[\eta_1],\dots,[\eta_t]\}$ is a basis of $\langle [h_1],\dots,[h_{2n+2}]\rangle.$
Then the functions $ h_i $ can be represented as follows:
\begin{equation}       \label{equ:represent_hi_by_eta1_dots_etat}
   h_i=c_i\eta_1^{l(i,1)}\cdots\eta_t^{l(i,t)}, \quad 1\leq i\leq 2n+2,
\end{equation}
where $ c_i\in\mathbb{C}^*$ and $ l(i,1),\dots,l(i,t)$ are integers.

By the assumption, there are $ 2s $ distinct indices $ i_1,\dots,i_{2s}$ such that the $ 2s$-tuple $([h_{i_1}],\dots,[h_{i_{2s}}])$ does not have the property $(P_{2s,s}).$ Without loss of generality, we may assume that the $ 2s$-tuple $([h_1],\dots,[h_{2s}])$ does not have the property $(P_{2s,s}).$

By making a transformation of homogeneous coordinates, we may assume that
\[
   (a^k_0,\dots,a^k_n)=(0,\dots,0,\underset{\underset{k-{\rm th}}{\uparrow}}{1},0,\dots,0), \quad 1\leq k\leq s,
\]
and
\[
   (a^{2s+k}_0,\dots,a^{2s+k}_n)=(0,\dots,0,\underset{\underset{(s+k)-{\rm th}}{\uparrow}}{1},0,\dots,0), \quad 1\leq k\leq n+1-s.
\]
After this transformation, we see that the functions $ f_0,\dots,f_n $, $ g_0,\dots,g_n $ are all not identically zero and that any minor of order $ s $ of the matrix $(a^i_j;\, 1\leq i\leq 2s, 0\leq j\leq s-1)$ does not vanish.

We claim that
\begin{equation}      \label{equ:det(aij,aijhi;1<=i<=2s)neq0}
\det\big(a^i_0,\dots,a^i_{s-1},a^i_0h_i,\dots,a^i_{s-1}h_i;\, 1\leq i\leq 2s\big)\not\equiv 0.
\end{equation}
For, if the left hand side of \eqref{equ:det(aij,aijhi;1<=i<=2s)neq0} is identically zero, then by Fujimoto's method in \cite{Fujimoto75} which uses the Laplace expansion formula and the Borel Lemma we conclude that $([h_1],\dots,[h_{2s}])$ has the property $(P_{2s,s}),$ which contradicts the assumption.

Since $ H_1,\dots,H_s,H_{2s+1},\dots,H_{n+s+1}$ are the coordinate hyperplanes, we see from the definition of $ h_i $ that
\begin{equation}      \label{equ:fi=hjgi_in_proof_of_dimVfg<=...}
   f_j=h_{j+1}g_j,\,\, 0\leq j\leq s-1, \quad\mbox{and}\quad f_j=h_{j+s+1}g_j,\,\, s\leq j\leq n.
\end{equation}

By the definition of $ h_i,$ we get the following identities:
\[
   a^i_0f_0+\dots+a^i_{s-1}f_{s-1}- a^i_0h_ig_0-\dots-a^i_{s-1}h_ig_{s-1}=b_i, \quad 1\leq i\leq 2s,
\]
where
\begin{align*}
   b_i &=-a^i_sf_s-\dots-a^i_nf_n+a^i_sh_ig_s+\dots+a^i_nh_ig_n
\\ (\mbox{by}\,\, \eqref{equ:fi=hjgi_in_proof_of_dimVfg<=...})\quad &= a^i_s(h_i-h_{2s+1})g_s+\dots+a^i_n(h_i-h_{n+s+1})g_n.
\end{align*}
By \eqref{equ:det(aij,aijhi;1<=i<=2s)neq0}, we can use the Cramer's rule to get the following expressions of $ g_i\, (0\leq i\leq s-1):$
\[
   g_i=\frac{\tilde{P}_{i,s}(h_1,\dots,h_{2n+2})g_s+\dots+\tilde{P}_{i,n}(h_1,\dots,h_{2n+2})g_n}{\tilde{P}(h_1,\dots,h_{2s})}, \quad 0\leq i\leq s-1,
\]
where $\tilde{P}_{i,s}(X_1,\dots,X_{2n+2}),\dots,\tilde{P}_{i,n}(X_1,\dots,X_{2n+2})$ and $\tilde{P}(X_1,\dots,X_{2s})$ are polynomials. Then by \eqref{equ:represent_hi_by_eta1_dots_etat}, we see that the functions $ g_i\, (0\leq i\leq s-1)$ can be expressed as follows:
\begin{equation}     \label{equ:express_gi_by_gs_dots_gn_and_eta1_dots_etat}
   g_i=\frac{P_{i,s}(1,\eta_1,\dots,\eta_t)g_s+\dots+P_{i,n}(1,\eta_1,\dots,\eta_t)g_n}{P(1,\eta_1,\dots,\eta_t)}, \quad 0\leq i\leq s-1,
\end{equation}
where $ P(z_0,z_1,\dots,z_t)$ is a homogeneous polynomial of degree $ d\geq 1,$ and each $ P_{i,j}(z_0,z_1,\dots,z_t)\,(0\leq i\leq s-1, s\leq j\leq n)$ is either zero or a homogeneous polynomial of degree $ d.$

By \eqref{equ:fi=hjgi_in_proof_of_dimVfg<=...} and \eqref{equ:represent_hi_by_eta1_dots_etat}, we see that there are monomials $ Q_i(z_0,\dots,z_t)$ and $ R_i(z_0,\dots,z_t)\,(0\leq i\leq n-1)$ with $\deg Q_i=\deg R_i $ such that
\begin{equation}    \label{equ:express_fi_by_gs_dots_gn_and_eta1_dots_etat}
   \frac{f_i}{f_n}=\frac{Q_i(1,\eta_1,\dots,\eta_t)}{R_i(1,\eta_1,\dots,\eta_t)}\cdot\frac{g_i}{g_n}, \quad 0\leq i\leq n-1.
\end{equation}

Let $\eta $ be the holomorphic map of $\mathbb{C}^m $ into $\mathbb{P}^t $ that is given by
\[
   \eta(z)=[1:\eta_1(z):\dots:\eta_t(z)], \quad z\in\mathbb{C}^m.
\]
And let $ f\times g\times \eta $ be the holomorphic map of $\mathbb{C}^m\setminus\big(I(f)\cup I(g)\big)$ into $\mathbb{P}^n\times\mathbb{P}^n\times\mathbb{P}^t $
that is given by
\[
   (f\times g\times \eta)(z)=(f(z),g(z),\eta(z)), \quad z\in\mathbb{C}^m\setminus\big(I(f)\cup I(g)\big).
\]
We denote by $ V_{f,g,\eta}$ the Zariski closure of the image of $ f\times g\times \eta $ in $\mathbb{P}^n\times\mathbb{P}^n\times\mathbb{P}^t.$ The same argument as in the proof of Proposition \ref{prop:Vfg_is_irreducible} shows that $ V_{f,g,\eta}$ is irreducible.

Put
\begin{align*}
   V:&=\big\{\big([x_0:\dots:x_n],[y_0:\dots:y_n],[z_0:\dots:z_t]\big)\in\mathbb{P}^n\times\mathbb{P}^n\times\mathbb{P}^t\, |\,
\\ & \hspace{2em} P(z_0,\dots,z_t)y_i-P_{i,s}(z_0,\dots,z_t)y_s-\dots-P_{i,n}(z_0,\dots,z_t)y_n=0,
\\ & \hspace{24em} 0\leq i\leq s-1,
\\ & \hspace{5em} x_iy_n R_i(z_0,\dots,z_t)- x_ny_i Q_i(z_0,\dots,z_t)=0, \quad 0\leq i\leq n-1\big\}.
\end{align*}
Then $ V $ is an algebraic set in $\mathbb{P}^n\times\mathbb{P}^n\times\mathbb{P}^t,$ and by \eqref{equ:express_gi_by_gs_dots_gn_and_eta1_dots_etat} and \eqref{equ:express_fi_by_gs_dots_gn_and_eta1_dots_etat} we see that
\[
   V_{f,g,\eta}\subseteq V.
\]
Let $ U $ be the subset of $ V $ which contains all points $\big([x_0:\dots:x_n],[y_0:\dots:y_n],[z_0:\dots:z_t]\big)$ in $ V $ such that
\[
   x_n\neq 0,\quad y_n\neq 0,\quad z_0\neq 0, \quad P(z_0,\dots,z_t)\neq 0,
\]
and
\[
   R_i(z_0,\dots,z_t)\neq 0,\quad 0\leq i\leq n-1.
\]
Then $ U $ is a Zariski-open subset of $ V.$ Since $ f_n $, $ g_n $, $ P(1,\eta_1,\dots,\eta_t)$, and $ R_i(1,\eta_1,\dots,\eta_t)\,(0\leq i\leq n-1)$ are all nonzero holomorphic functions on $\mathbb{C}^m,$ we see easily that
\[
   V_{f,g,\eta}\cap U\neq \emptyset.
\]

Let $\phi:U\to\mathbb{C}^{n-s+t}$ be the map
\[
   \big([x_0:\dots:x_n],[y_0:\dots:y_n],[z_0:\dots:z_t]\big)\mapsto \Big(\frac{y_s}{y_n},\dots,\frac{y_{n-1}}{y_n},\frac{z_1}{z_0},\dots,\frac{z_t}{z_0}\Big).
\]
By the definitions of $ V $ and $ U,$ we see easily that $\phi $ is injective. Since $ V_{f,g,\eta}$ is irreducible and $ V_{f,g,\eta}\cap U $ is a nonempty Zariski-open subset of $ V_{f,g,\eta},$ we conclude from the injectivity of $\phi|_{V_{f,g,\eta}\cap U}:V_{f,g,\eta}\cap U\to \mathbb{C}^{n-s+t}$ that
\[
   \dim V_{f,g,\eta}\leq n-s+t.
\]

Let $\pi_{1,2}:\mathbb{P}^n\times\mathbb{P}^n\times\mathbb{P}^t\to \mathbb{P}^n\times\mathbb{P}^n $ be the projection onto the first two components and let $\pi_3:\mathbb{P}^n\times\mathbb{P}^n\times\mathbb{P}^t\to \mathbb{P}^t $ be the projection onto the third component. Since $\pi_{1,2}(V_{f,g,\eta})$ contains the image of $ f\times g,$ we have
\[
   V_{f\times g}\subseteq\pi_{1,2}(V_{f,g,\eta}).
\]
Thus
\[
   \dim V_{f\times g}\leq \dim\pi_{1,2}(V_{f,g,\eta})\leq \dim V_{f,g,\eta}.
\]
Because $\eta_1,\dots,\eta_t $ are multiplicatively independent functions in $\mathcal{H}^*,$ we see by Proposition \ref{prop:multi._independe._implies_algebrai._independe.} that the holomorphic map $\eta:\mathbb{C}^m\to\mathbb{P}^t $ is algebraically non-degenerate. Then since $\pi_3(V_{f,g,\eta})\supseteq\eta(\mathbb{C}^m),$ we get $\pi_3(V_{f,g,\eta})=\mathbb{P}^t,$ and thus
\[
   t\leq \dim V_{f,g,\eta}.
\]
It follows from the above inequalities that
\[
   \max\{t,\dim V_{f\times g}\}\leq \dim V_{f,g,\eta}\leq n-s+t,
\]
which proves Theorem \ref{thm:[hi]doesnot(P2s,s)_implies_dimVfg<=n-s+t}.
\end{proof}

\section{Two propositions}          \label{sec:Two_propositions}

Let $ f $ and $ g $ be two meromorphic maps of $\mathbb{C}^m $ into $\mathbb{P}^n $ and let $\{H_j\}_{j=1}^{2n+2}$ be a family of hyperplanes in $\mathbb{P}^n $ in general position. Assume that $ f(\mathbb{C}^m)\not\subseteq H_j $, $ g(\mathbb{C}^m)\not\subseteq H_j,$ and $ f^*(H_j)=g^*(H_j)$ for $ 1\leq j\leq 2n+2.$

Take reduced representations $(f_0,\dots,f_n)$ and $(g_0,\dots,g_n)$ of $ f $ and $ g,$ respectively. Take a linear form $ a^j_0 X_0+\dots+a^j_n X_n $ that defines $ H_j $ for each $ 1\leq j\leq 2n+2.$ Define $ h_i $ as \eqref{equ:definition_of_hi} for each $ 1\leq i\leq 2n+2.$ Then each $ h_i $ is a nowhere zero holomorphic function on $\mathbb{C}^m.$ By Proposition \ref{prop:hi_has_property_P(2n+2,n+1)}, the $(2n+2)$-tuple $([h_1],\dots,[h_{2n+2}])$ has the property $(P_{2n+2,n+1}).$

\begin{prop}       \label{prop:[h1]=...=[h(n+2)]_and..._imply_f=g}
If $ g $ is linearly non-degenerate and there are $(n+2)$ distinct indices $ i_1,\dots,i_{n+2}$ such that
 \[
    [h_{i_1}]=[h_{i_2}]=\dots=[h_{i_{n+2}}],
 \]
then $ f=g.$
\end{prop}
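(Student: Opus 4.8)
The plan is to exploit the $(n+2)$ coincidences $[h_{i_1}]=\dots=[h_{i_{n+2}}]$ to show that $f$ and $g$ have proportional reduced representations. Let me write $c$ for the common value, so $h_{i_1}=\dots=h_{i_{n+2}}=c\cdot(\text{same constant})$; more precisely, since the $[h_{i_k}]$ all agree in $\mathcal{H}^*/\mathbb{C}^*$, there is a single nowhere-zero holomorphic function $h$ with $h_{i_k}=c_k h$ for nonzero constants $c_k$. Recalling $h_i=(f,H_i)/(g,H_i)$, this says $(f,H_{i_k})=c_k\,h\,(g,H_{i_k})$ for $k=1,\dots,n+2$. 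After absorbing $h$ by rescaling the reduced representation of $f$ (replacing $(f_0,\dots,f_n)$ by $(f_0/h,\dots,f_n/h)$, which is still a reduced representation since $h$ is nowhere zero), I may assume $h\equiv 1$, so that
\[
   (f,H_{i_k})=c_k\,(g,H_{i_k}), \qquad 1\leq k\leq n+2.
\]

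First I would use the general position of $\{H_j\}$ to extract linear algebra. Among the $n+2$ indices, any $n+1$ of the defining linear forms $L_{i_1},\dots,L_{i_{n+2}}$ are linearly independent, hence the first $n+1$ of them, say $L_{i_1},\dots,L_{i_{n+1}}$, form a basis of the space of linear forms on $\mathbb{P}^n$. Thus each coordinate form $X_0,\dots,X_n$ is a linear combination of $L_{i_1},\dots,L_{i_{n+1}}$, which means every coordinate function $f_j$ is a linear combination of $(f,H_{i_1}),\dots,(f,H_{i_{n+1}})$, and likewise for $g$ with the same coefficients. The key step is that the equations $(f,H_{i_k})=c_k(g,H_{i_k})$ for $k=1,\dots,n+2$ over-determine the $c_k$: if all the $c_k$ were equal to a single constant $c$, then by the linear-combination identity we would get $f_j=c\,g_j$ for all $j$, i.e. $f=g$ as maps into $\mathbb{P}^n$, and we would be done. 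So the real content is to show the $c_k$ cannot differ.

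The main obstacle, and the heart of the argument, is ruling out distinct $c_k$. Here I would bring in the linear non-degeneracy of $g$. Writing the $(n+2)$-nd relation in terms of the basis given by the first $n+1$: since $L_{i_{n+2}}=\sum_{k=1}^{n+1}\lambda_k L_{i_k}$ for some scalars $\lambda_k$ (all nonzero, again by general position), we get
\[
   (f,H_{i_{n+2}})=\sum_{k=1}^{n+1}\lambda_k\,(f,H_{i_k})=\sum_{k=1}^{n+1}\lambda_k c_k\,(g,H_{i_k}),
\]
while the coincidence relation also gives $(f,H_{i_{n+2}})=c_{n+2}(g,H_{i_{n+2}})=c_{n+2}\sum_{k=1}^{n+1}\lambda_k(g,H_{i_k})$. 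Subtracting yields $\sum_{k=1}^{n+1}\lambda_k(c_k-c_{n+2})\,(g,H_{i_k})\equiv 0$. Because $g$ is linearly non-degenerate, the functions $(g,H_{i_1}),\dots,(g,H_{i_{n+1}})$ are linearly independent over $\mathbb{C}$ (a linear dependence among them would force the image of $g$ into a hyperplane). Since each $\lambda_k\neq 0$, it follows that $c_k=c_{n+2}$ for every $k$, so all the constants coincide; combined with the previous paragraph this gives $f=g$. The only care needed is the bookkeeping that rescaling by $h$ respects the reduced-representation property and that the general-position hypothesis genuinely supplies both the nonvanishing of the $\lambda_k$ and the independence used for $g$.
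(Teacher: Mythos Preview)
Your argument is correct and is precisely the standard one: rescale so that the $(n+2)$ ratios become constants, use general position to write $L_{i_{n+2}}$ as a combination $\sum_{k=1}^{n+1}\lambda_k L_{i_k}$ with all $\lambda_k\neq 0$, and then use the linear non-degeneracy of $g$ to conclude from $\sum_{k=1}^{n+1}\lambda_k(c_k-c_{n+2})(g,H_{i_k})\equiv 0$ that all $c_k$ coincide. The paper itself does not write out a proof of this proposition but simply cites Fujimoto \cite[pp.~11--12]{Fujimoto75}; your proof is exactly the argument one finds there, so nothing further is needed.
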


\begin{proof}
The proof is essentially contained in \cite[pp. 11--12]{Fujimoto75}. We refer the reader to \cite{Fujimoto75} for the proof.
\end{proof}

\begin{prop}       \label{prop:hi=h(i+n+1)_implies_f=g}
Assume $ g(\mathbb{C}^m)$ is not contained in any hypersurface of degree $\leq (n+1)$ in $\mathbb{P}^n.$ If there is a bijection $\sigma: \{1,\dots,2n+2\}\to\{1,\dots,2n+2\}$ such that
 \[
    [h_{\sigma(i)}]=[h_{\sigma(i+n+1)}],\quad 1\leq i\leq n+1,
 \]
then $ f=g.$
\end{prop}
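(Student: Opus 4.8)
The plan is to convert the hypothesis into a system of bilinear relations on $\mathbb{P}^n\times\mathbb{P}^n$ that $(f,g)$ satisfies identically, and then to use the degree bound in the non-degeneracy hypothesis twice. First I would relabel the hyperplanes so that $\sigma=\mathrm{id}$, and choose homogeneous coordinates so that $H_i=\{X_{i-1}=0\}$ for $1\le i\le n+1$; this is possible because these $n+1$ hyperplanes are in general position. With reduced representations $(f_0,\dots,f_n)$ and $(g_0,\dots,g_n)$ one then has $(f,H_i)=f_{i-1}$ and $(g,H_i)=g_{i-1}$, so $h_i=f_{i-1}/g_{i-1}$ for $1\le i\le n+1$. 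Writing $M_i(x):=\sum_l a^{i+n+1}_l x_l$ for the linear form defining the paired hyperplane $H_{i+n+1}$, the hypothesis $[h_i]=[h_{i+n+1}]$ reads $h_i=d_i\,h_{i+n+1}$ for some constant $d_i\in\mathbb{C}^*$; clearing denominators gives the identity $f_{i-1}\,M_i(g)=d_i\,g_{i-1}\,M_i(f)$ on $\mathbb{C}^m$. Regarding $x=f(z)$ and $y=g(z)$ as points of $\mathbb{P}^n$, this is the bidegree-$(1,1)$ relation $x_{i-1}M_i(y)=d_i\,y_{i-1}M_i(x)$, valid identically for $1\le i\le n+1$.

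Next I would read these $n+1$ relations as a linear system in $x$. Let $A(y)$ be the $(n+1)\times(n+1)$ matrix with $(i,l)$-entry $M_i(y)\,\delta_{i-1,l}-d_i\,y_{i-1}\,a^{i+n+1}_l$, so that $A(g)\cdot{}^{t}(f_0,\dots,f_n)\equiv 0$. Since $(f_0,\dots,f_n)$ is a reduced representation it is not identically zero, whence $\det A(g(z))\equiv 0$ on $\mathbb{C}^m$. But $\det A(y)$ is homogeneous of degree $n+1$ in $y$; were it not the zero polynomial, $g(\mathbb{C}^m)$ would lie in the hypersurface $\{\det A=0\}$ of degree $\le n+1$, contradicting the hypothesis on $g$. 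Hence $\det A\equiv 0$ as a polynomial identity in $y$.

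The third step extracts the constants. Evaluating $\det A(y)\equiv0$ at each coordinate point $y=\mathbf{e}_k$ ($0\le k\le n$) makes $A$ triangular up to a single permutation: every row $i\ne k+1$ has the unique nonzero entry $a^{i+n+1}_k$, which forces the permutation and yields $\det A|_{y=\mathbf{e}_k}=\pm(1-d_{k+1})\prod_{i=1}^{n+1}a^{i+n+1}_k$. General position guarantees $a^{i+n+1}_k\ne 0$ for every $i$ (the $n$ coordinate hyperplanes $\{X_l=0\}_{l\ne k}$ together with $H_{i+n+1}$ are $n+1$ hyperplanes in general position, so the $X_k$-coefficient of $H_{i+n+1}$ cannot vanish). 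Therefore $d_{k+1}=1$ for every $k$, i.e.\ all $d_i=1$; in particular $A(y)\cdot{}^{t}y\equiv 0$, so $y\in\ker A(y)$ for every $y$.

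Finally I would conclude $f=g$. With all $d_i=1$, the evaluation at $y=\mathbf{e}_k$ shows the $k$-th column of $A(\mathbf{e}_k)$ vanishes while the other $n$ columns are independent, so $\operatorname{rank}A(\mathbf{e}_k)=n$ and $\ker A(\mathbf{e}_k)=\langle\mathbf{e}_k\rangle$. By lower semicontinuity of rank, $\operatorname{rank}A(y)=n$ on a Zariski-dense open set, and there $\ker A(y)=\langle y\rangle$ since $y$ already lies in the kernel. The rank-drop locus $Z=\{y:\operatorname{rank}A(y)\le n-1\}$ is thus a proper subvariety, cut out by the maximal minors of $A$, which are homogeneous of degree $n$; picking one minor $\mu\not\equiv0$ gives $Z\subseteq\{\mu=0\}$, a hypersurface of degree $\le n$. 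As $g$ lies in no hypersurface of degree $\le n+1$, we have $g(\mathbb{C}^m)\not\subseteq Z$, so for $z$ outside the proper analytic set $g^{-1}(Z)\cup I(f)$ we get $\operatorname{rank}A(g(z))=n$ and hence $f(z)\in\ker A(g(z))=\langle g(z)\rangle$, i.e.\ $[f(z)]=[g(z)]$. Since $f$ and $g$ agree on a nonempty open set, $f=g$. The main obstacle is the linear-algebra heart of the last two steps: one must check that $\det A\equiv0$ collapses \emph{exactly} to $d_i=1$, and that once $d_i=1$ the rank of $A$ drops only along a hypersurface of degree $\le n+1$. Both points rest on general position (nonvanishing of the relevant coefficients and of a maximal minor) and on deploying the degree bound in the hypothesis in two distinct places.
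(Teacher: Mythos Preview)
Your argument is correct and follows essentially the same route as the paper's proof: set up a linear system $A(g)\cdot f\equiv 0$ with $A$ linear in $g$, use the degree-$(n+1)$ non-degeneracy of $g$ to make $\det A\equiv 0$ a polynomial identity, specialize at coordinate points to force all the constants to equal $1$, and then use that a maximal $n\times n$ minor is a nonzero degree-$n$ polynomial (again invoking non-degeneracy) to conclude $f\propto g$ generically. The only cosmetic differences are that you normalize the \emph{first} block $H_1,\dots,H_{n+1}$ to coordinate hyperplanes whereas the paper normalizes the second block $H_{n+2},\dots,H_{2n+2}$, and you phrase the last step via $\ker A(y)=\langle y\rangle$ on a rank-$n$ locus while the paper phrases it via Cramer's rule on the $n\times n$ subsystem---these are the same argument in different clothing.
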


\begin{proof} (The argument is essentially same as in \cite[pp. 139--140]{Fujimoto76}.)

Without loss of generality, we may assume that
\[
   [h_i]=[h_{i+n+1}],\quad 1\leq i\leq n+1,
\]
namely,
\[
   h_i=c_{i-1}h_{i+n+1},\quad 1\leq i\leq n+1,
\]
for some nonzero constants $ c_0,\dots,c_n.$

By making a transformation of homogeneous coordinates, we may assume that
\[
   (a^{i+n+1}_0,\dots,a^{i+n+1}_n)=(0,\dots,0,\underset{\underset{i-{\rm th}}{\uparrow}}{1},0,\dots,0), \quad 1\leq i\leq n+1.
\]
After this transformation, we see from the definition of $ h_j $ that
\[
   \frac{a^{i+1}_0f_0+\dots+a^{i+1}_nf_n}{a^{i+1}_0g_0+\dots+a^{i+1}_ng_n}=c_i\frac{f_i}{g_i}, \quad 0\leq i\leq n.
\]
From these, we get the following identities:
\begin{equation}        \label{equ:bi0f0+dots+binfn=0_in_proof_hi=h(i+n+1)}
   b^i_0f_0+\dots+b^i_nf_n=0, \quad 0\leq i\leq n,
\end{equation}
where
\[
   b^i_i=a^{i+1}_ig_i-c_i(a^{i+1}_0g_0+\dots+a^{i+1}_ng_n)
\]
and
\[
   b^i_j=a^{i+1}_jg_i, \quad j\in\{0,\dots,n\}\setminus\{i\}.
\]
Eliminating $ f_0,\dots, f_n $ from the above identities, we get
\begin{equation}        \label{equ:det(bij(gi))=0_in_proof_hi=h(i+n+1)}
   \det(b^i_0,\dots, b^i_n;\, 0\leq i\leq n)\equiv 0.
\end{equation}
Noting that each $ b^i_j $ can be expressed as a homogeneous polynomial of degree $ 1 $ in $ g_0,\dots,g_n,$ by the assumption, we conclude that the left hand side of \eqref{equ:det(bij(gi))=0_in_proof_hi=h(i+n+1)} vanishes as a determinant over the polynomial ring $\mathbb{C}[g_0,\dots,g_n].$ Then, for an index $ k\in\{0,\dots,n\},$ by putting $ g_k=1 $ and $ g_j=0\, (j\neq k),$ we get
\[
   (1-c_k)a^{k+1}_k\cdot(-c_0a^1_k)\cdot\cdots\cdot(-c_{k-1}a^k_k)\cdot(-c_{k+1}a^{k+2}_k)\cdot\cdots\cdot(-c_na^{n+1}_k)=0,
\]
which implies that $ c_k=1.$ Therefore $ c_0=c_1=\dots=c_n=1.$

Consider the following system of linear equations over the field of meromorphic functions on $\mathbb{C}^m $:
\[
   (**) \qquad b^i_0x_0+\dots+b^i_{n-1}x_{n-1}=-b^i_nf_n, \quad 0\leq i\leq n-1.
\]
Its coefficient determinant $\det(b^i_0,\dots, b^i_{n-1};\, 0\leq i\leq n-1)$ can be expressed as a nonzero homogeneous polynomial of degree $ n $ in $ g_0,\dots,g_n.$ Thus by the assumption,
\[
   \det(b^i_0,\dots, b^i_{n-1};\, 0\leq i\leq n-1)\not\equiv 0,
\]
which shows that $(**)$ has a unique solution. By \eqref{equ:bi0f0+dots+binfn=0_in_proof_hi=h(i+n+1)}, we see that
\[
   (x_0,\dots,x_{n-1})=(f_0,\dots,f_{n-1})
\]
is a solution of $(**).$ On the other hand, by $ c_0=\dots=c_{n-1}=1,$ we get
\[
   b^i_i=-\sum_{j=0,j\neq i}^n a^{i+1}_jg_j, \quad 0\leq i\leq n-1,
\]
and then one easily verifies that
\[
   (x_0,\dots,x_{n-1})=\Big(\frac{f_n}{g_n}g_0,\dots,\frac{f_n}{g_n}g_{n-1}\Big)
\]
is also a solution of $(**).$ Therefore
\[
   f_i=\frac{f_n}{g_n}g_i, \quad 0\leq i\leq n-1,
\]
which implies $ f=g.$ This finishes the proof of Proposition \ref{prop:hi=h(i+n+1)_implies_f=g}.
\end{proof}

\section{The uniqueness theorem with generic $(2n+2)$ hyperplanes}           \label{sec:uniqueness_theorem_generic_(2n+2)}

We shall denote by $(\mathbb{P}^n)^*$ the set of all hyperplanes in $\mathbb{P}^n $ and denote by $((\mathbb{P}^n)^*)^{2n+2}$ the Cartesian product $\underset{(2n+2)\,\mbox{copies}}{\underbrace{(\mathbb{P}^n)^*\times\cdots\times(\mathbb{P}^n)^*}}.$

\begin{thm}           \label{thm:generic_(2n+2)_accurate}
There is a proper algebraic subset $ V $ of $((\mathbb{P}^n)^*)^{2n+2}$ with the following property:

Let $ H_1,\dots,H_{2n+2}$ be $(2n+2)$ hyperplanes in $\mathbb{P}^n $ in general position such that $(H_1,\dots,H_{2n+2})\not\in V.$ Let $ f $ and $ g $ be meromorphic maps of $\mathbb{C}^m $ into $\mathbb{P}^n $ such that $ f(\mathbb{C}^m)\not\subseteq H_j $, $ g(\mathbb{C}^m)\not\subseteq H_j $ and $ f^*(H_j)=g^*(H_j)$ $(1\leq j\leq 2n+2),$ and assume $ g $ is algebraically non-degenerate. Then $ f=g.$
\end{thm}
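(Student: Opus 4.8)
The plan is to reduce the theorem to a combinatorial analysis of the tuple $([h_1],\dots,[h_{2n+2}])$ attached to $f$ and $g$, and to push the finitely many ``exceptional'' combinatorial patterns into the algebraic set $V$. As in the setup preceding Proposition~\ref{prop:hi_has_property_P(2n+2,n+1)}, I fix reduced representations, form $h_i=(f,H_i)/(g,H_i)$, normalise so that at least one $h_i$ is constant, and set $t=\mathrm{rank}\{[h_1],\dots,[h_{2n+2}]\}$; by Proposition~\ref{prop:0<=t<=n} we have $0\le t\le n$, and $f=g$ holds exactly when $t=0$. Since $g$ is algebraically non-degenerate, the Zariski closure of $g(\mathbb{C}^m)$ is all of $\mathbb{P}^n$, so the image of $V_{f\times g}$ under the second projection $\mathbb{P}^n\times\mathbb{P}^n\to\mathbb{P}^n$ is Zariski-dense and $\dim V_{f\times g}\ge n$. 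Feeding this into Theorem~\ref{thm:[hi]doesnot(P2s,s)_implies_dimVfg<=n-s+t} contrapositively, any failure of property $(P_{2s,s})$ with $t<s\le n+1$ would give $n\le\dim V_{f\times g}\le n-s+t$, forcing $s\le t$; hence the $(2n+2)$-tuple has property $(P_{2s,s})$ for every $s$ with $t<s\le n+1$.

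Next I would show that the ``generic'' combinatorial patterns already give $f=g$, using nothing about the hyperplanes beyond general position. If $t=0$ then all $[h_i]=1$ and Proposition~\ref{prop:[h1]=...=[h(n+2)]_and..._imply_f=g} applies. If $t\ge 1$, take $s=t+1$; Lemma~\ref{lem:1stCombiLem} applied with $r=2s$ produces $2(n-t+1)$ equal elements, which is at least $n+2$ as soon as $t\le n/2$, again triggering Proposition~\ref{prop:[h1]=...=[h(n+2)]_and..._imply_f=g}. When $t=n$ the whole tuple has rank $s-1=n$ and property $(P_{2n+2,n+1})$, so Lemma~\ref{lem:3rdCombiLem}(ii) applies directly: type (A) is a perfect pairing of the $2n+2$ indices into $n+1$ equal pairs, which is exactly the hypothesis of Proposition~\ref{prop:hi=h(i+n+1)_implies_f=g}, while type (B) with $k=0$ exhibits $n+2$ copies of the unit and again invokes Proposition~\ref{prop:[h1]=...=[h(n+2)]_and..._imply_f=g}. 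In each of these cases $f=g$.

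The remaining patterns --- type (B) with $k\ge 1$ for $t=n$, and the intermediate ranks $n/2<t<n$, where I would apply Lemma~\ref{lem:3rdCombiLem}(ii) to a $2(t+1)$-element subtuple of rank $t$ containing the unit and then propagate the resulting normal form to all $2n+2$ indices via Observation~\ref{obsv:two_proportional_tuples_with_unit_generate_same_subgroup} and the coincidences from Lemma~\ref{lem:1stCombiLem} --- are the ones that require the genericity. For each such pattern I would translate every defining relation $[h_{i_1}]\cdots[h_{i_s}]=[h_{j_1}]\cdots[h_{j_s}]$ into the bihomogeneous identity $\prod_k (f,H_{i_k})\,\prod_k (g,H_{j_k})=c\,\prod_k (f,H_{j_k})\,\prod_k (g,H_{i_k})$ and run the Cramer/elimination argument of Proposition~\ref{prop:hi=h(i+n+1)_implies_f=g}. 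Because $g$ is algebraically non-degenerate, any identity these relations force among $g_0,\dots,g_n$ is a formal identity in $\mathbb{C}[X_0,\dots,X_n]$; the elimination then returns either $f=g$ or the vanishing of a certain coefficient determinant, a polynomial in the hyperplane coefficients $a^i_j$. I would define $V$ to be the Zariski closure, inside $((\mathbb{P}^n)^*)^{2n+2}$, of the union of these determinantal loci over the finite list of exceptional patterns and index assignments; outside $V$ none of these determinants vanishes, so every pattern --- exceptional or not --- forces $f=g$.

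The main obstacle is this last paragraph, in two respects. First, the bookkeeping for $n/2<t<n$: one must verify that the type (A)/(B) normal form on a $2(t+1)$-subtuple, together with the $2(n-t+1)$ coincidences from Lemma~\ref{lem:1stCombiLem}, genuinely pins down the pattern on all $2n+2$ indices, so that the case list is exhaustive. Second, and more seriously, one must prove that $V$ is a \emph{proper} subset --- that each elimination determinant is a nonzero polynomial in the $a^i_j$ rather than an identity. I expect to settle this exactly as in the proofs of Theorem~\ref{thm:[hi]doesnot(P2s,s)_implies_dimVfg<=n-s+t} and Proposition~\ref{prop:hi=h(i+n+1)_implies_f=g}: after a transformation of homogeneous coordinates turning some of the $H_j$ into coordinate hyperplanes, each determinant specialises to a nonzero monomial or a manifestly nonzero minor, exhibiting a configuration at which it does not vanish and hence establishing the properness of $V$.
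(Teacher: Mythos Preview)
Your first two paragraphs are essentially the paper's Steps~1--3: the reduction via Theorem~\ref{thm:[hi]doesnot(P2s,s)_implies_dimVfg<=n-s+t} to property $(P_{2t+2,t+1})$, the use of Lemma~\ref{lem:1stCombiLem} to produce $2(n-t+1)$ equal classes, and the application of Lemma~\ref{lem:3rdCombiLem} to a $(2t+2)$-subtuple, disposing of type~(A) via Proposition~\ref{prop:hi=h(i+n+1)_implies_f=g} and of type~(B) with $k=0$ via Proposition~\ref{prop:[h1]=...=[h(n+2)]_and..._imply_f=g}. That part is fine.

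The gap is in your third paragraph, and it is the heart of the matter. Your plan is to translate the multiplicative relations among the $[h_i]$ into bihomogeneous identities $\prod_k(f,H_{i_k})\prod_k(g,H_{j_k})=c\prod_k(f,H_{j_k})\prod_k(g,H_{i_k})$ and then ``run the Cramer/elimination argument of Proposition~\ref{prop:hi=h(i+n+1)_implies_f=g}''. But that argument works only because the relations $h_i=c_{i-1}h_{i+n+1}$ are \emph{linear} in the $(f,H_j)$ and $(g,H_j)$ separately; your general identities have degree~$s$ on each side, and there is no evident determinant to write down. More seriously, the unknown scalars $c$ sit inside whatever ``coefficient determinant'' you produce, so you do not get a polynomial in the $a^i_j$ alone, and hence no algebraic subset of $((\mathbb{P}^n)^*)^{2n+2}$. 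Your list of ``exceptional patterns'' is therefore not finite in any useful sense: for each combinatorial shape $(k;a_1,\dots,a_k)$ there is a continuum of constants.

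What the paper does instead is quite different. It runs the proof of Theorem~\ref{thm:[hi]doesnot(P2s,s)_implies_dimVfg<=n-s+t} \emph{contrapositively} on a specific $(2t+2)$-block of indices: since $\dim V_{f\times g}\ge n$, the $(2t+2)\times(2t+2)$ determinant built there must vanish identically as a polynomial in $\eta_1,\dots,\eta_t$ (this is the identity~\eqref{equ:det(eta1_dots_etat)=0_as_an_implication_of_dimVfg>=n}). Then Fujimoto's algebraic Lemma~\ref{lem:algebraic_lemma} --- which you never invoke --- classifies exactly when such a determinant can vanish identically: it forces $a_k=t$, pins $k$ down to $1$ or $t$, and \emph{determines the constants} $c_i$ (roots of unity in case~$(\alpha)$, or $1$ and $(-1)^t$ in case~$(\beta)$). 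A second application of the same mechanism (Step~6) rules out all intermediate ranks, so $t=n$. Only now, with the $c_i$ known numerically, does specialising $\eta_1=\dots=\eta_n=2$ yield honest nonzero polynomials $P_1,P_2$ in the $a^i_j$, and $V$ is the union of their zero loci over all permutations of the indices. The determination of the $c_i$ by Lemma~\ref{lem:algebraic_lemma} is precisely the missing ingredient that makes $V$ both well-defined and proper; without it your outline does not close.
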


\begin{prop}    \label{prop:dimVfg>=n_when_g_is_algebrai_nondege}
Let $ f $ and $ g $ be meromorphic maps of $\mathbb{C}^m $ into $\mathbb{P}^n,$ and let $ V_{f\times g}$ be as in Definition {\rm\ref{defn:Vfg}}. If $ g $ is algebraically non-degenerate, then $\dim V_{f\times g}\geq n.$
\end{prop}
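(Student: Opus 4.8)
The plan is to project $V_{f\times g}$ onto the second factor and to exploit the algebraic non-degeneracy of $g$, in the same spirit as the treatment of $\pi_3(V_{f,g,\eta})$ at the end of the proof of Theorem \ref{thm:[hi]doesnot(P2s,s)_implies_dimVfg<=n-s+t}. Let $\pi_2:\mathbb{P}^n\times\mathbb{P}^n\to\mathbb{P}^n$ be the projection onto the second component. Since $\mathbb{P}^n$ is complete (equivalently, since projective morphisms are closed), the image $\pi_2(V_{f\times g})$ is a Zariski-closed algebraic subset of $\mathbb{P}^n$.

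Next I would show that $\pi_2(V_{f\times g})=\mathbb{P}^n$. Fix a reduced representation $(g_0,\dots,g_n)$ of $g$. By the definition of $V_{f\times g}$, the image of $f\times g$ lies in $V_{f\times g}$, so $\pi_2(V_{f\times g})$ contains $g\big(\mathbb{C}^m\setminus(I(f)\cup I(g))\big)$. If a homogeneous polynomial $P$ vanishes on this set, then $P(g_0,\dots,g_n)$ is a holomorphic function on $\mathbb{C}^m$ vanishing on the nonempty open set $\mathbb{C}^m\setminus(I(f)\cup I(g))$, hence vanishes identically by the identity theorem; thus $P$ vanishes on all of $g(\mathbb{C}^m)$. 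Consequently the Zariski closure of $g(\mathbb{C}^m)$ is contained in the closed set $\pi_2(V_{f\times g})$. Now algebraic non-degeneracy of $g$ means that $g(\mathbb{C}^m)$ is contained in no hypersurface, while every proper algebraic subset of $\mathbb{P}^n$ is contained in some hypersurface (take any nonzero element of its homogeneous ideal). Hence the Zariski closure of $g(\mathbb{C}^m)$ cannot be proper and must equal $\mathbb{P}^n$, so $\pi_2(V_{f\times g})=\mathbb{P}^n$ and $\dim\pi_2(V_{f\times g})=n$.

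Finally, since the dimension of the image of an algebraic set under a morphism cannot exceed the dimension of the set, I obtain
\[
   n=\dim\pi_2(V_{f\times g})\leq\dim V_{f\times g},
\]
which is exactly the asserted inequality.

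The steps are all standard, and I expect no serious obstacle. The only points requiring care are bookkeeping rather than substance: one must check that replacing $g\big(\mathbb{C}^m\setminus(I(f)\cup I(g))\big)$ by the full image $g(\mathbb{C}^m)$ does not enlarge its Zariski closure beyond $\pi_2(V_{f\times g})$ (handled above by the identity theorem, since the removed locus is thin), and one must invoke the two standard facts that projections of projective varieties are closed and that morphisms do not increase dimension. The genuinely essential input is the elementary observation that algebraic non-degeneracy forces the Zariski closure of $g(\mathbb{C}^m)$ to be the whole of $\mathbb{P}^n$; everything else is formal.
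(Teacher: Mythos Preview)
Your proof is correct and follows exactly the same approach as the paper: project via $\pi_2$ onto the second factor, use algebraic non-degeneracy of $g$ to conclude $\pi_2(V_{f\times g})=\mathbb{P}^n$, and then compare dimensions. You simply spell out in more detail the points the paper leaves implicit (closedness of the projection, passing from $g\big(\mathbb{C}^m\setminus(I(f)\cup I(g))\big)$ to $g(\mathbb{C}^m)$ via the identity theorem).
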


\begin{proof}
Let $\pi_2:\mathbb{P}^n\times\mathbb{P}^n\to\mathbb{P}^n $ be the projection onto the second component. Since $\pi_2(V_{f\times g})$ contains the image of $ g $ and $ g $ is algebraically non-degenerate, we see that $\pi_2(V_{f\times g})=\mathbb{P}^n.$ Therefore
\[
   n=\dim\pi_2(V_{f\times g})\leq \dim V_{f\times g},
\]
which proves Proposition \ref{prop:dimVfg>=n_when_g_is_algebrai_nondege}.
\end{proof}

\begin{proof}[Proof of Theorem {\rm \ref{thm:generic_(2n+2)_accurate}}]
Let $(f_0,\dots,f_n)$ and $(g_0,\dots,g_n)$ be reduced representations of $ f $ and $ g,$ respectively. Take a linear form $ a^j_0 X_0+\dots+a^j_n X_n $ that defines $ H_j $ for each $ 1\leq j\leq 2n+2.$ Define $ h_i $ as \eqref{equ:definition_of_hi} for each $ 1\leq i\leq 2n+2.$ By choosing a new reduced representation of $ f $ if necessary, we may assume at least one $ h_i $ is constant.
Consider the $(2n+2)$-tuple $([h_1],\dots,[h_{2n+2}])$ of elements in $\mathcal{H}^*/\mathbb{C}^*.$ Define $ t:={\rm rank}\{[h_1],\dots,[h_{2n+2}]\}.$ As in Section \ref{sec:Vfg}, we have $ 0\leq t\leq n.$

We assume that $ f\neq g.$ Then $ t\geq 1.$ For, if $ t=0 $ which means $[h_1]=[h_2]=\dots=[h_{2n+2}]=1,$ then Proposition \ref{prop:[h1]=...=[h(n+2)]_and..._imply_f=g} tells us that $ f=g.$

The proof contains several steps.

\vspace{1em}
\emph{Step 1.} Let $ V_{f\times g}$ be as in Definition \ref{defn:Vfg}. Proposition \ref{prop:dimVfg>=n_when_g_is_algebrai_nondege} tells us that $\dim V_{f\times g}\geq n $ because $ g $ is algebraically non-degenerate. It then follows from Theorem \ref{thm:[hi]doesnot(P2s,s)_implies_dimVfg<=n-s+t} that the $(2n+2)$-tuple $([h_1],\dots,[h_{2n+2}])$ has the property $(P_{2t+2,t+1}).$ For, if this is not true, we conclude by putting $ s=t+1 $ in Theorem \ref{thm:[hi]doesnot(P2s,s)_implies_dimVfg<=n-s+t} that
\[
   \dim V_{f\times g}\leq n-1,
\]
which contradicts the previous conclusion.

\vspace{1em}
\emph{Step 2.} By Lemma \ref{lem:1stCombiLem}, we may assume, after suitably choosing a new reduced representation of $ f $ and changing indices, that
\[
   [h_{2t+1}]=[h_{2t+2}]=\dots=[h_{2n+2}]=1.
\]
Since
\[
   {\rm rank}\{[h_1],\dots,[h_{2t+2}]\}=t,
\]
we can apply Lemma \ref{lem:3rdCombiLem} to the $(2t+2)$-tuple $([h_1],\dots,[h_{2t+2}])$ and conclude that there are multiplicatively independent elements $\beta_1,\dots,\beta_t\in \mathcal{H}^*/\mathbb{C}^*$ such that the $ h_i $ are represented, after a suitable change of indices, as one of the following two types:
\begin{enumerate}[\rm (a)]
  \item $([h_1],\dots,[h_{2n+2}])=(1,1,\dots,1,1,\beta_1,\beta_1,\dots,\beta_t,\beta_t);$
  \item $([h_1],\dots,[h_{2n+2}])$
         \par \hspace{2.5em} $=\big(1,1,\dots,1,\beta_1,\dots,\beta_t,(\beta_1\cdots\beta_{a_1})^{-1},\dots,(\beta_{a_{k-1}+1}\cdots\beta_{a_k})^{-1}\big),$
         \par where $ 0\leq k\leq t $ and $ 1\leq a_1<a_2<\dots<a_k\leq t.$
\end{enumerate}

\vspace{1em}
\emph{Step 3.} If the $ h_i $ are of the type (a), then Proposition \ref{prop:hi=h(i+n+1)_implies_f=g} shows that $ f=g,$ which contradicts our assumption. So the $ h_i $ are of the type (b).

We note that $ k\geq 1 $ because $ t\leq n.$ In fact, if $ k=0,$ then there are $ 2n+2-t(\geq n+2)$ distinct indices $ i_1,\dots,i_{2n+2-t}\in\{1,\dots,2n+2\}$ such that $[h_{i_1}]=[h_{i_2}]=\dots=[h_{i_{2n+2-t}}]=1,$ and thus Proposition \ref{prop:[h1]=...=[h(n+2)]_and..._imply_f=g} tells us that $ f=g,$ which contradicts our assumption.

Now, we may assume, after suitably changing indices and choosing a new reduced representation of $ f,$ that
\begin{align}       \label{equ:representations_of_hi_with_undetermined_constants}
   (h_1,\dots,h_{2n+2})\qquad &          \notag
\\   =\big(c_1(\eta_1\cdots\eta_{a_1}&)^{-1},c_2(\eta_{a_1+1}\cdots\eta_{a_2})^{-1},\dots,c_k(\eta_{a_{k-1}+1}\cdots\eta_{a_k})^{-1},      \notag
\\ &\hspace{8.5em} c_{k+1},\dots,c_{2n+1-t},1,\eta_1,\dots,\eta_t\big),
\end{align}
where $ 1\leq k\leq t $, $ 1\leq a_1<a_2<\dots<a_k\leq t $, $ c_1,\dots,c_{2n+1-t}$ are nonzero constants, and $\eta_1,\dots,\eta_t $ are multiplicatively independent functions in $\mathcal{H}^*.$

\vspace{1em}
\emph{Step 4.} Consider the $(2t+2)$ functions $ h_1,\dots,h_{t+1},h_{2n+2-t},\dots,h_{2n+2}.$
As in the proof of Theorem \ref{thm:[hi]doesnot(P2s,s)_implies_dimVfg<=n-s+t}, we make a transformation of homogeneous coordinates such that
\[
   (a^i_0,\dots,a^i_n)=(0,\dots,0,\underset{\underset{(i-2n-1+t)-{\rm th}}{\uparrow}}{1},0,\dots,0), \quad 2n+2-t\leq i\leq 2n+2,
\]
and
\[
   (a^i_0,\dots,a^i_n)=(0,\dots,0,\underset{\underset{i-{\rm th}}{\uparrow}}{1},0,\dots,0), \quad t+2\leq i\leq n+1.
\]
We note that, after this transformation, any minor of the matrix $(a^i_j;\, 1\leq i\leq t+1,0\leq j\leq n)$ does not vanish.

Since $\dim V_{f\times g}\geq n,$ using the essentially same argument as in the proof of Theorem \ref{thm:[hi]doesnot(P2s,s)_implies_dimVfg<=n-s+t}, we get
\begin{equation}       \label{equ:det()=0_as_an_implication_of_dimVfg>=n}
\det\big(a^i_0,\dots,a^i_t,a^i_0h_i,\dots,a^i_th_i;\, i=1,\dots,t+1,2n+2-t,\dots,2n+2\big)\equiv 0.
\end{equation}
The matrix $\big(a^i_0,\dots,a^i_t;\, i=2n+2-t,\dots,2n+2\big)$ is the identity matrix of order $(t+1),$ and by \eqref{equ:representations_of_hi_with_undetermined_constants} one sees that
\[
   \big(a^i_0h_i,\dots,a^i_th_i;\, i=2n+2-t,\dots,2n+2\big)={\rm diag}(1,\eta_1,\dots,\eta_t).
\]
Then, by \eqref{equ:representations_of_hi_with_undetermined_constants}, we conclude from \eqref{equ:det()=0_as_an_implication_of_dimVfg>=n} the following identity.
\begin{equation}        \label{equ:det(eta1_dots_etat)=0_as_an_implication_of_dimVfg>=n}
  \left|
    \begin{array}{cccc}
       a^1_0(c_1-\tilde{\eta}_1) & a^1_1(c_1-\tilde{\eta}_1\cdot\eta_1) & \cdots & a^1_t(c_1-\tilde{\eta}_1\cdot\eta_t)
    \\ a^2_0(c_2-\tilde{\eta}_2) & a^2_1(c_2-\tilde{\eta}_2\cdot\eta_1) & \cdots & a^2_t(c_2-\tilde{\eta}_2\cdot\eta_t)
    \\   \vdots & \vdots & \ddots & \vdots
    \\ a^k_0(c_k-\tilde{\eta}_k) & a^k_1(c_k-\tilde{\eta}_k\cdot\eta_1) & \cdots & a^k_t(c_k-\tilde{\eta}_k\cdot\eta_t)
    \\ a^{k+1}_0(c_{k+1}-1) & a^{k+1}_1(c_{k+1}-\eta_1) & \cdots & a^{k+1}_t(c_{k+1}-\eta_t)
    \\   \vdots & \vdots & \ddots & \vdots
    \\ a^{t+1}_0(c_{t+1}-1) & a^{t+1}_1(c_{t+1}-\eta_1) & \cdots & a^{t+1}_t(c_{t+1}-\eta_t)
    \end{array}
  \right|=0,
\end{equation}
where $\tilde{\eta}_1=\eta_1\dots\eta_{a_1}$, $\tilde{\eta}_2=\eta_{a_1+1}\dots\eta_{a_2}$, $\dots$, $\tilde{\eta}_k=\eta_{a_{k-1}+1}\dots\eta_{a_k}.$

Since $\eta_1,\dots,\eta_t $ are multiplicatively independent functions in $\mathcal{H}^*,$ using the Borel Lemma (see Proposition \ref{prop:multi._independe._implies_algebrai._independe.}), we conclude that the left hand side of \eqref{equ:det(eta1_dots_etat)=0_as_an_implication_of_dimVfg>=n} vanishes as a determinant over the polynomial ring $\mathbb{C}[\eta_1,\dots,\eta_t].$

\vspace{1em}
\emph{Step 5.} We shall use the following lemma due to Fujimoto \cite{Fujimoto78}.
\begin{lem}[see Lemma 4.1 in \cite{Fujimoto78}]       \label{lem:algebraic_lemma}
Let $ 1\leq k\leq t $ be positive integers and let $ 1\leq a_1<a_2<\dots<a_k\leq t $ be an increasing sequence of positive integers.
Let $(a^i_j;\, 1\leq i\leq t+1, 0\leq j\leq t)$ be a square matrix of order $(t+1)$ whose any minor does not vanish.
Let $ c_1,\dots,c_{t+1}$ be nonzero complex numbers. Assume the following equation holds:
 \begin{equation}        \label{equ:identity_of_polynomials_in_algebraic_lemma}
  \left|
    \begin{array}{cccc}
       a^1_0(c_1-\tilde{\eta}_1) & a^1_1(c_1-\tilde{\eta}_1\cdot\eta_1) & \cdots & a^1_t(c_1-\tilde{\eta}_1\cdot\eta_t)
    \\ a^2_0(c_2-\tilde{\eta}_2) & a^2_1(c_2-\tilde{\eta}_2\cdot\eta_1) & \cdots & a^2_t(c_2-\tilde{\eta}_2\cdot\eta_t)
    \\   \vdots & \vdots & \ddots & \vdots
    \\ a^k_0(c_k-\tilde{\eta}_k) & a^k_1(c_k-\tilde{\eta}_k\cdot\eta_1) & \cdots & a^k_t(c_k-\tilde{\eta}_k\cdot\eta_t)
    \\ a^{k+1}_0(c_{k+1}-1) & a^{k+1}_1(c_{k+1}-\eta_1) & \cdots & a^{k+1}_t(c_{k+1}-\eta_t)
    \\   \vdots & \vdots & \ddots & \vdots
    \\ a^{t+1}_0(c_{t+1}-1) & a^{t+1}_1(c_{t+1}-\eta_1) & \cdots & a^{t+1}_t(c_{t+1}-\eta_t)
    \end{array}
  \right|=0,
 \end{equation}
where the left hand side of the above equation is a determinant over the polynomial ring $\mathbb{C}[\eta_1,\dots,\eta_t],$
and $\tilde{\eta}_1=\eta_1\dots\eta_{a_1}$, $\tilde{\eta}_2=\eta_{a_1+1}\dots\eta_{a_2}$, $\dots$, $\tilde{\eta}_k=\eta_{a_{k-1}+1}\dots\eta_{a_k}.$
Then
 \begin{enumerate}[\rm (i)]
   \item $ a_k=t;$
   \item one of the following two cases occurs:
       \begin{itemize}
         \item[$(\alpha)$] $ k=1 $, $ c_1=1,$ and $\{c_2,\dots,c_{t+1}\}=\{\xi,\xi^2,\dots,\xi^t\},$ where $\xi=\exp(\frac{2\pi\sqrt{-1}}{t+1});$
         \item[$(\beta)$] $ k=t $, $ c_1=\dots=c_t=1,$ and $ c_{t+1}=(-1)^t.$
       \end{itemize}
 \end{enumerate}
\end{lem}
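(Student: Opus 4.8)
The plan is to convert the determinantal identity \eqref{equ:identity_of_polynomials_in_algebraic_lemma} into a finite system of scalar relations among $c_1,\dots,c_{t+1}$ and then solve that system. Write $A=(a^i_j)$, and abbreviate $\epsilon_i=\tilde\eta_i$ for $1\le i\le k$, $\epsilon_i=1$ for $k<i\le t+1$, together with $d_0=1$ and $d_j=\eta_j$ $(1\le j\le t)$. Then the $(i,j)$ entry is $a^i_j(c_i-\epsilon_i d_j)=c_i\,a^i_j-\epsilon_i d_j\,a^i_j$, so each row is the difference of $c_i$ times the $i$-th row of $A$ and $\epsilon_i$ times the $i$-th row of $A\,\mathrm{diag}(d_0,\dots,d_t)$. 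First I would expand the determinant by multilinearity over the rows, indexing the $2^{t+1}$ resulting terms by the set $S$ of rows on which the variable part is chosen; applying the generalized Laplace expansion to each term produces
\[
\det=\sum_{\substack{S\subseteq\{1,\dots,t+1\},\,T\subseteq\{0,1,\dots,t\}\\ |S|=|T|}}\pm\Big(\prod_{i\notin S}c_i\Big)\det\big(A[S,T]\big)\,\det\big(A[S^c,T^c]\big)\,\mu(S,T),
\]
where $A[S,T]$ denotes the submatrix with rows $S$ and columns $T$, and $\mu(S,T)=\prod_{i\in S,\,i\le k}\tilde\eta_i\cdot\prod_{j\in T\setminus\{0\}}\eta_j$ is a monomial in $\eta_1,\dots,\eta_t$. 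The decisive input is the hypothesis that \emph{every} minor of $A$ is nonzero: this makes each factor $\det(A[S,T])\det(A[S^c,T^c])$ nonzero, so that \eqref{equ:identity_of_polynomials_in_algebraic_lemma} is equivalent to the vanishing, monomial by monomial, of the sum above — a family of relations whose only unknowns are the nonzero constants $c_i$ and the fixed nonzero minor values.

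Second, I would prove (i). Since the blocks defining $\tilde\eta_1,\dots,\tilde\eta_k$ tile $\{1,\dots,a_k\}$, the variable $\eta_t$ can occur in some $\mu(S,T)$ to the second power only when $t\le a_k$; if $a_k<t$ then $\eta_t$ occurs to degree at most one throughout the expansion. I would then isolate the monomial of extremal $\eta_t$-degree (equivalently, inspect the leading coefficient as $\eta_t\to\infty$) and show that, because the relevant minor products are nonzero and the surviving $c_i$-weights cannot cancel, the identity forces $a_k=t$. This is most cleanly organised as the base of an induction on $t$, whose inductive step extracts precisely this extremal coefficient to reduce to an identity of the same shape with $t$ and $k$ decreased.

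Third, with $a_k=t$ secured, I would read off the relations on the $c_i$ from the monomials that are hit by only a few pairs $(S,T)$ — the constant monomial, the top monomial $\eta_1^2\cdots\eta_t^2$, and the ones immediately adjacent to these. These relations first force the block pattern to be one of the two extremes $k=1$ or $k=t$, intermediate patterns $1<k<t$ leading to an inconsistent system, and then pin down the constants: in the case $k=1$ the cancellation makes $1,c_2,\dots,c_{t+1}$ the complete set of $(t+1)$-th roots of unity with $c_1=1$, yielding $(\alpha)$; in the case $k=t$ it forces $c_1=\dots=c_t=1$ and, via the single ``linear'' relation (the exact analogue of $c_1+c_2=0$ in the $t=1$ computation), gives $c_{t+1}=(-1)^t$, yielding $(\beta)$. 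The two cases genuinely coincide when $t=1$.

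The hard part will be the bookkeeping in the third step: a generic monomial in the $\eta_j$ is produced by many pairs $(S,T)$, so the work lies in locating monomials produced by few pairs, tracking the Laplace signs and the products $\prod_{i\notin S}c_i$ attached to them, and above all in ruling out every intermediate block pattern $1<k<t$ by exhibiting an unsatisfiable cancellation. The nonvanishing of all minors of $A$ is exactly what forbids any accidental cancellation originating from $A$ itself, so that all cancellation must be carried by the combinatorics of the $c_i$; converting that combinatorics into the precise root-of-unity descriptions in $(\alpha)$ and $(\beta)$ is the crux of the argument.
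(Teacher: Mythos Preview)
The paper itself does \emph{not} prove this lemma: it is simply quoted from Fujimoto's paper \cite{Fujimoto78} (Lemma~4.1 there) and used as a black box in Step~5 of the proof of Theorem~\ref{thm:generic_(2n+2)_accurate}. There is therefore no in-paper argument to compare your proposal against; what follows is an assessment of your plan on its own merits.

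Your first step is correct. Writing the $(i,j)$ entry as $c_i a^i_j-\epsilon_i d_j a^i_j$, expanding by multilinearity over the rows, and then grouping the permutations in each term according to $T=\sigma(S)$ does give exactly
\[
\det=\sum_{|S|=|T|}(\pm 1)\Big(\prod_{i\notin S}c_i\Big)\,\det A[S,T]\,\det A[S^c,T^c]\,\mu(S,T),
\]
with $\mu(S,T)$ as you describe, and the hypothesis that every minor of $A$ is nonzero makes each product $\det A[S,T]\,\det A[S^c,T^c]$ a nonzero scalar.

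The difficulty is with the sentence ``the nonvanishing of all minors of $A$ is exactly what forbids any accidental cancellation originating from $A$ itself, so that all cancellation must be carried by the combinatorics of the $c_i$.'' That is not what the hypothesis buys you. For a fixed monomial $m$ in the $\eta_j$, the coefficient of $m$ is
\[
\sum_{(S,T):\,\mu(S,T)=m}(\pm 1)\Big(\prod_{i\notin S}c_i\Big)\,\det A[S,T]\,\det A[S^c,T^c],
\]
a genuine linear combination of several nonzero products of complementary minors. Even though each summand is nonzero, the sum can vanish for particular values of these minors; ``all minors nonzero'' does not decouple the $A$--data from the $c_i$--data. Since the hypothesis of the lemma refers to a \emph{single} matrix $A$, you are not free to vary $A$ either. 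Your Step~3 therefore needs more than bookkeeping: you must either locate, for every assertion you want, a monomial $m$ that is hit by exactly one pair $(S,T)$ (so that its coefficient is a single nonzero scalar times a product of $c_i$'s), or manufacture $A$--independent relations by clever specialisations of the $\eta_j$ (for instance $\eta_1=\cdots=\eta_t=1$ gives $\prod_{i}(c_i-1)\det A=0$, hence some $c_i=1$, with no minor--products involved). Your outline gestures at the first device (``monomials that are hit by only a few pairs'') but does not name which monomials do the job for proving $a_k=t$, for eliminating $1<k<t$, or for pinning down $\{c_2,\dots,c_{t+1}\}=\{\xi,\dots,\xi^t\}$ in case~$(\alpha)$; identifying those monomials and checking that their coefficients really involve a single $(S,T)$ is precisely the content of Fujimoto's argument, not a routine matter of signs.

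In short: the expansion is right, the overall strategy is the natural one, but the proposal stops before the substantive part of the proof, and the one heuristic you lean on (that nonzero minors force the cancellation to be purely combinatorial in the $c_i$) is not correct as stated.
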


Applying Lemma \ref{lem:algebraic_lemma} to the equation \eqref{equ:det(eta1_dots_etat)=0_as_an_implication_of_dimVfg>=n}, we know that $ a_k=t $ and one of the following two cases occurs:
\begin{itemize}
  \item[$(\alpha)'$] $ k=1 $, $ c_1=1,$ and $\{c_2,\dots,c_{t+1}\}=\{\xi,\dots,\xi^t\},$ where $\xi=\exp(\frac{2\pi\sqrt{-1}}{t+1});$
  \item[$(\beta)'$] $ k=t $, $ c_1=\dots=c_t=1,$ and $ c_{t+1}=(-1)^t.$
\end{itemize}

\vspace{1em}
\emph{Step 6.} We shall show that $ t=n.$ To show $ t=n,$ we consider the following two cases.

\noindent$\bullet$ $ k=1.$

If $ t<n,$ then there are $ 2n+1-t(\geq n+2)$ distinct indices $ i_1,\dots,i_{2n+1-t}\in\{1,\dots,2n+2\}$ such that $[h_{i_1}]=[h_{i_2}]=\dots=[h_{i_{2n+1-t}}]=1,$ and this implies $ f=g,$ which contradicts our assumption. Hence $ t=n.$

\noindent$\bullet$ $ k\geq 2.$

Assume $ t<n.$ Consider the following $(2t+2)$ functions:
\[
   h_1,\dots,h_{k-1},h_{k+1},\dots,h_{t+2},h_{2n+2-t},\dots,h_{2n+2}.
\]
As in Step 4, by making a suitable transformation of homogeneous coordinates and using the argument in the proof of Theorem \ref{thm:[hi]doesnot(P2s,s)_implies_dimVfg<=n-s+t}, we conclude an equation which is similar to \eqref{equ:det(eta1_dots_etat)=0_as_an_implication_of_dimVfg>=n}. Then with the help of the Borel Lemma, we can apply Lemma \ref{lem:algebraic_lemma} to obtain $ a_{k-1}=t,$ which is a contradiction. Thus $ t=n.$

In any case, we get $ t=n.$

\vspace{1em}
\emph{Step 7.} Using the conclusions in Step 5 and Step 6, we see that the equation \eqref{equ:det(eta1_dots_etat)=0_as_an_implication_of_dimVfg>=n} becomes one of the following two forms:
\begin{equation}        \label{equ:det(eta1_dots_etan)=0_when_k=1}
  \left|
    \begin{array}{cccc}
       a^1_0(1-\eta_1\cdots \eta_n) & a^1_1(1-\eta_1\cdots\eta_n\cdot\eta_1) & \cdots & a^1_n(1-\eta_1\cdots\eta_n\cdot\eta_n)
    \\ a^2_0(\xi-1) & a^2_1(\xi-\eta_1) & \cdots & a^2_n(\xi-\eta_n)
    \\ a^3_0(\xi^2-1) & a^3_1(\xi^2-\eta_1) & \cdots & a^3_n(\xi^2-\eta_n)
    \\   \vdots & \vdots & \ddots & \vdots
    \\ a^{n+1}_0(\xi^n-1) & a^{n+1}_1(\xi^n-\eta_1) & \cdots & a^{n+1}_n(\xi^n-\eta_n)
    \end{array}
  \right|=0,
\end{equation}
where $\xi=\exp(\frac{2\pi\sqrt{-1}}{n+1})$ and the superscripts of the coefficients $ a^i_j $ are suitably exchanged, or
\begin{equation}        \label{equ:det(eta1_dots_etan)=0_when_k=t=n}
  \left|
    \begin{array}{cccc}
       a^1_0(1-\eta_1) & a^1_1(1-\eta_1\cdot\eta_1) & \cdots & a^1_n(1-\eta_1\cdot\eta_n)
    \\ a^2_0(1-\eta_2) & a^2_1(1-\eta_2\cdot\eta_1) & \cdots & a^2_n(1-\eta_2\cdot\eta_n)
    \\   \vdots & \vdots & \ddots & \vdots
    \\ a^n_0(1-\eta_n) & a^n_1(1-\eta_n\cdot\eta_1) & \cdots & a^n_n(1-\eta_n\cdot\eta_n)
    \\ a^{n+1}_0((-1)^n-1) & a^{n+1}_1((-1)^n-\eta_1) & \cdots & a^{n+1}_n((-1)^n-\eta_n)
    \end{array}
  \right|=0.
\end{equation}
We note that the left hand sides of the above two equations are both determinants of order $(n+1)$ over the polynomial ring $\mathbb{C}[\eta_1,\dots,\eta_n].$

By putting $\eta_1=\dots=\eta_n=2,$ we see that the equation \eqref{equ:det(eta1_dots_etan)=0_when_k=1} can be written as
\[
   P_1(a^1_0,\dots,a^1_n,a^2_0,\dots,a^2_n,\dots,a^{n+1}_0,\dots,a^{n+1}_n)=0
\]
and the equation \eqref{equ:det(eta1_dots_etan)=0_when_k=t=n} can be written as
\[
   P_2(a^1_0,\dots,a^1_n,a^2_0,\dots,a^2_n,\dots,a^{n+1}_0,\dots,a^{n+1}_n)=0,
\]
where both $ P_1 $ and $ P_2 $ are nonzero homogeneous polynomials of degree $(n+1)$ in $(n+1)^2 $ variables and are homogeneous polynomials of degree 1 in each block of variables $ a^i_0,\dots,a^i_n $ ($ i=1,\dots,n+1 $).

We note that the coefficients $ a^i_j $ in \eqref{equ:det(eta1_dots_etan)=0_when_k=1} and \eqref{equ:det(eta1_dots_etan)=0_when_k=t=n} are not the original coefficients of the linear forms that define the hyperplanes $ H_i,$ since we have made a transformation of homogeneous coordinates in Step 4. Using the original data of the coefficients of the linear forms that define $ H_i,$ we in fact have the following equations:
\[
   P_1\big((a^{\sigma(1)}_0,\dots,a^{\sigma(1)}_n)A_{\sigma}^{-1},\dots,(a^{\sigma(n+1)}_0,\dots,a^{\sigma(n+1)}_n)A_{\sigma}^{-1}\big)=0
\]
or
\[
   P_2\big((a^{\sigma(1)}_0,\dots,a^{\sigma(1)}_n)A_{\sigma}^{-1},\dots,(a^{\sigma(n+1)}_0,\dots,a^{\sigma(n+1)}_n)A_{\sigma}^{-1}\big)=0,
\]
where $\sigma:\{1,\dots,2n+2\}\to\{1,\dots,2n+2\}$ is a permutation and $ A_{\sigma}^{-1}$ is the inverse matrix of
\[
   A_{\sigma}=\big(a^{\sigma(i)}_0,\dots,a^{\sigma(i)}_n;\, n+2\leq i\leq 2n+2\big).
\]
Since $ A_{\sigma}^{-1}=A_{\sigma}^*/\det(A_{\sigma})$ where $ A_{\sigma}^*$ is the adjoint matrix of $ A_{\sigma},$ we conclude the following equations:
\begin{equation}      \label{equ:P1(aijA*,...)=0}
   P_1\big((a^{\sigma(1)}_0,\dots,a^{\sigma(1)}_n)A_{\sigma}^*,\dots,(a^{\sigma(n+1)}_0,\dots,a^{\sigma(n+1)}_n)A_{\sigma}^*\big)=0
\end{equation}
or
\begin{equation}      \label{equ:P2(aijA*,...)=0}
   P_2\big((a^{\sigma(1)}_0,\dots,a^{\sigma(1)}_n)A_{\sigma}^*,\dots,(a^{\sigma(n+1)}_0,\dots,a^{\sigma(n+1)}_n)A_{\sigma}^*\big)=0.
\end{equation}

One sees that the equation \eqref{equ:P1(aijA*,...)=0} defines a proper algebraic subset $ V_{1,\sigma}$ of $((\mathbb{P}^n)^*)^{2n+2}$ and the equation \eqref{equ:P2(aijA*,...)=0} defines a proper algebraic subset $ V_{2,\sigma}$ of $((\mathbb{P}^n)^*)^{2n+2}.$ Let $ V $ be the finite union
\[
   V:=\bigcup_{\mbox{permutation}\,\, \sigma} (V_{1,\sigma}\cup V_{2,\sigma}).
\]
Then $ V $ is also a proper algebraic subset of $((\mathbb{P}^n)^*)^{2n+2},$ and $(H_1,\dots,H_{2n+2})\in V.$

So if we take the $ V $ in the condition of Theorem \ref{thm:generic_(2n+2)_accurate} to be the above $ V $ and assume $(H_1,\dots,H_{2n+2})\not\in V,$ then we conclude $ f=g.$ This finishes the proof of Theorem \ref{thm:generic_(2n+2)_accurate}.
\end{proof}

\bibliographystyle{plain}
\bibliography{zkRef}

\end{document}